\theoremstyle{plain}
\newtheorem{theorem}{Theorem}[section]
\newtheorem{lemma}[theorem]{Lemma}
\theoremstyle{definition}
\newtheorem{remark}[theorem]{Remark}
\newtheorem{definition}[theorem]{Definition}
\newtheorem{corollary}[theorem]{Corollary}
\newcommand{\Con}[1] { \mathrm{CH}_*(#1) }
\newcommand{\timestep}{h}
\newcommand{\stepfunction}{\tau}
\renewcommand{\phi}{\varphi}
\renewcommand{\epsilon}{\varepsilon}
\newcommand{\setof}[1]{{\{\,#1\,\}}}
\DeclareMathAlphabet{\mathcal}{OMS}{cmsy}{m}{n}
\newcommand{\mvmap}{\rightrightarrows}
\newcommand\cA{\mathcal A}
\newcommand\cF{\mathcal F}
\newcommand\cN{\mathcal N}
\newcommand\cP{\mathcal P}
\newcommand\cX{\mathcal X}
\newcommand\cY{\mathcal Y}
\newcommand\cZ{\mathcal Z}
\newcommand\NN{\mathbb{N}}
\newcommand\RR{\mathbb{R}}
\newcommand\ZZ{\mathbb{Z}}
\newcommand{\cl}{\operatorname{cl}}
\newcommand{\gker}{\operatorname{gker}}
\newcommand{\im}{\operatorname{im}}
\newcommand{\id}{\operatorname{id}}
\newcommand{\dom}{\operatorname{dom}}
\newcommand{\Inv}{\operatorname{Inv}}
\newcommand{\Int}{\operatorname{int}}
\newcommand{\Ho}[1] { \operatorname{H}_*(#1) }
\newcommand{\Leray} { \operatorname{L} }
\newcommand{\enumeration}[1]{
  \begin{enumerate}[label=(\roman*), itemsep=0.35ex, topsep=0.7ex]
    #1
  \end{enumerate}
}
\begin{document}

\author{Konstantin Mischaikow}
\address{Department of Mathematics,
Rutgers, The State University of New Jersey,
  110 Frelinghuysen Rd,
Piscataway, NJ  08854-8019, USA}
\email{mischaik@math.rutgers.edu}
\author{Marian Mrozek}
\address{
  Division of Computational Mathematics, Faculty of Mathematics and Computer Science,
  Jagiellonian University,
  ul. \L{}ojasiewicza 6, 30-348~Krak\'ow, Poland
}
\email{mrozek@ii.uj.edu.pl}
\author{Frank Weilandt}
\address{
  Division of Computational Mathematics, Faculty of Mathematics and Computer Science,
  Jagiellonian University,
  ul. \L{}ojasiewicza 6, 30-348~Krak\'ow, Poland
}
\email{weilandt@ii.uj.edu.pl}

\title[Discretization stragies for Morse decompositions]
{Discretization strategies for computing Conley indices and Morse decompositions of flows}

\begin{abstract}
Conley indices and Morse decompositions of flows can be found by using algorithms which
rigorously analyze discrete dynamical systems. This usually involves integrating a time
discretization of the flow using interval arithmetic.
We compare the old idea of fixing a time step as a parameters to
a time step continuously varying in phase space.
We present an example where this second strategy necessarily yields better
numerical outputs and prove that our outputs yield a valid Morse
decomposition of the given flow.
\end{abstract}

\thanks{K.~Mischaikow was
partially supported by NSF grants NSF-DMS-0915019, 1125174, 1248071, and contracts from AFOSR and DARPA.
M.~Mrozek was partially supported by the EU {\sc Toposys} project FP7-ICT-318493-STREP and
by ESF under the ACAT Research Network Programme. F.~Weilandt was supported by the EU {\sc Toposys} project FP7-ICT-318493-STREP
and by the FNP MPD program \emph{Geometry and Topology in Physical Models}.}

\maketitle

\section{Introduction}

While the numerical approximation of ordinary differential equations has a long history, the systematic study of how to compute time invariant structures began in the 1980s.
Rigorous computations of these structures is an even more recent phenomenon
(see \cite{AKP,DJ} and references therein).
These latter efforts can be roughly divided into two approaches: direct computation of invariant sets, e.g., periodic orbits, heteroclinic and homoclinic orbits, invariant manifolds, and a more indirect approach based on identification of isolating neighborhoods.
The advantages of the direct approach are clear: it leads to efficient algorithms and provides precise numerical bounds on solutions.
The disadvantage is that, in general, it does not provide information about the global structure of the dynamics and the results are sensitive to changes in parameters.
As indicated below, the indirect approach overcomes these disadvantages, however, the construction of appropriate methods of approximation has proven to be a significant challenge.

For the sake of simplicity consider a smooth  ordinary differential equation
 \begin{equation}
 \label{eq:ode}
 \dot{x} = v(x),\quad x\in \RR^d
 \end{equation}
that generates a flow $\varphi\colon \RR\times \RR^d\to \RR^d$.
The focus of this paper is on obtaining a finite representation of $\varphi$ from which information concerning the structure of the dynamics can be obtained.
A theoretical resolution to this challenge is as follows.

Recall that given $N\subset \RR^d$ the \emph{maximal invariant set} in $N$ under the flow $\phi$ is
\[
\Inv(N,\varphi) := \{ x \in N  \mid  \varphi(\RR,x) \subset N \}.
\]
A compact set $N\subset \RR^d$ is an \emph{isolating neighborhood} under $\varphi$ if $\Inv(N,\varphi) \subset \Int{N}$. Then $\Inv(N,\varphi)$ is called an \emph{isolated invariant set}.
The Conley index (see Section~\ref{sec:ConleyIndex}) is an algebraic topological invariant that provides information about the existence and structure of isolated invariant sets.

To perform computations, we make use of a discretization in time.
Choose $h>0$ and define the diffeomorphism $\varphi_h\colon \RR^d \to \RR^d$ by
\[
\varphi_h(x) := \varphi(h,x).
\]
In analogy with the setting of flows, the maximal invariant set in $N$ under $\varphi_h$ is defined by
\[
\Inv(N,\varphi_h) := \{x\in N\mid \varphi_h^n(x)\in N,\ \forall n\in \ZZ\}
\]
from which one can define isolating neighborhoods and isolated invariant sets.

A fundamental result \cite[Theorem 1]{MM1} guarantees that $S\subset \RR^d$ is an isolated invariant set for $\varphi$ if and only if $S$ is an isolated invariant set for $\varphi_h$, independent of the choice of $h>0$.
Furthermore, according to \cite[Theorem 2]{MM1} the Conley index of $S$ computed using $\varphi_h$ determines the Conley index for $S$ under $\varphi$.
Thus from a mathematical perspective no information is lost by studying the map $\varphi_h$ as opposed to the flow $\varphi$.

To obtain a representation of $\varphi_h$ with which we can compute, we discretize
the phase space.
For the sake of simplicity of presentation, we assume that we are interested in understanding the global dynamics of $\varphi$ or equivalently $\varphi_h$ restricted to a rectangular isolating neighborhood $X\subset \RR^d$.
This allows us to discretize $X$ using a cubical grid $\cX$  (see Section~\ref{sec:numerics}) with diameter less than some given $\epsilon >0$.
The dynamics of $\varphi_h$ can be encoded using a multivalued map $\cF_\rho\colon\cX \mvmap\cX$ defined by
\[
\cF_\rho(\xi) := \setof{ \xi'\in\cX \mid B_\rho(\varphi_h(\xi))\cap\xi' \neq \varnothing }
\]
where $B_\rho(\varphi_h(\xi)) := \setof{x\in X\mid \inf_{y\in \varphi_h(\xi)}\setof{\| x-y\|} <\rho}$.

Making use of the fact that $\cF_\rho$ can be interpreted as a directed graph, a wide variety of efficient algorithms have been developed for finding isolating neighborhoods and Morse decompositions for $\varphi_h$ (see~\cite{AKKMOP},\cite{KMV1}) and for computing the associated Conley indices \cite{KMM,MM2,Sz}.
Furthermore, by choosing finer grids, i.e., letting $\epsilon \to 0$ and making better approximations, i.e., letting $\rho\to 0$, one can recover any isolated invariant set \cite{KMV1}.
More generally, one can find all attractor-repeller pairs or equivalently all Morse decompositions and in the limit recover Conley's fundamental decomposition theorem \cite{BK,KMV1}.
Thus, from a theoretical perspective this approach allows us to recover the local and global dynamics associated with isolated invariant sets generated by ordinary differential equations.

However, in concrete applications there are significant technical obstructions.
Observe that there are three explicit parameters in the above mentioned approach: the time step $h$, the grid size $\epsilon$, and the accuracy of the approximation of the dynamics $\rho$.
Furthermore, the dynamics that can be extracted  is quite sensitive to the particular choices of these parameters.
For example, for a fixed $\epsilon$, if $h$ is too small then $\xi\in \cF_\rho(\xi)$ for every $\xi\in\cX$ and every $\rho\geq 0$.
In this case no isolating neighborhood can be extracted and hence there is no interesting dynamics that can be resolved.
Notice that the number of grid elements increases rapidly as an inverse function of $\epsilon$ and thus it becomes computationally prohibitive to choose $\epsilon$ too small.
Observe that if $v(x)\neq 0$ for all $x\in\xi$, then it is reasonable to assume that for a sufficiently large $h$, $\xi\not\in \cF_\rho(\xi)$, at which point one may hope to be able to start extracting interesting dynamical features.
Clearly, the size of the parameter $h$ depends on $\| v\|$,
the magnitude of the vector field.
The naive conclusion is that $h$ should be chosen to be large.
However, this leads to other difficulties.
We need to evaluate $\varphi_h$ which in practice requires us to integrate the differential equation \eqref{eq:ode} over time $h$.
Simple Gronwall inequalities imply that, in general, the numerical bounds on errors associated with this integration will grow exponentially in $h$. Suggesting that large $h$ leads to exponentially large $\rho$, which in turn leads to large images of $\cF_\rho$.  Larger images of $\cF_\rho$ imply that less information related to the dynamics of $\varphi$ can be extracted.

To address these issues, in this paper we consider a more general approach to encoding the dynamics. Let $\tau\colon X\to (0,\infty)$ be a continuous function and consider the map $\varphi_\tau\colon X\to \RR^d$ defined by
\[
\varphi_\tau(x) := \varphi(\tau(x),x).
\]
In principle this allows us to vary the integration time over the phase space, which allows us to compensate for the varying magnitude of the vector field and  sensitivity of propagation of numerical errors.
A brief outline of the paper is as follows.

Section~\ref{sec:numerics} presents numerical examples that contrast results using  fixed time steps $\varphi_h$ and spatially varying time steps $\varphi_\tau$.
The goal is not to provide new results concerning the dynamics of a particular ordinary differential equation, but rather to demonstrate the potential usefulness of this approach.

We use the results of Section~\ref{sec:numerics} as motivation for the central mathematical results of this paper.
As indicated above, \cite[Theorem 1]{MM1} guarantees that on the level of isolated invariant sets the dynamics of $\varphi_h$ captures the dynamics of $\varphi$.
Since this is not always true for a $\varphi_\tau$ with varying time steps we prove Lemma~\ref{criterionEqualInv}, which provides a criterion under which we obtain the desired isolation.

In Section~\ref{sec:ConleyIndex}, we show that computing the Conley index of
an isolated invariant set $S$  for
$\varphi_\stepfunction$ and $\phi$ provides the
Conley index for $S$ under the flow $\varphi$.
This generalizes~\cite[Theorem 2]{MM1}.
In Section~\ref{sec:MD}, we prove that a Morse decomposition for $\phi_\tau$ is a Morse decomposition
for the flow $\phi$ if all the Morse sets are invariant under $\phi$,
thereby showing that our algorithm does compute what we are interested in.

Varying the time step in space was first proposed in
\cite{CMLZ}, but with less attention for the mathematical background,
which we present here.

This article uses the following notations:
$\RR_{\geq 0} := \{ x \in \RR \mid x \geq 0 \}$,
$\RR_+ := \{ x \in \RR \mid x > 0 \}.$
We recall definitions where they are needed for our statements.
Their proofs require some background in dynamical systems,
which can be found in \cite{T} and other textbooks about the field.

\section{A numerical example}
\label{sec:numerics}

\subsection{Combinatorial representations}

Given $d>0$ and a tuple $s \in \RR_+^d$ representing a box size,
the space $\RR^d$ is the union of the following cubes (more precisely,
products of intervals):
\[ \cY = \left\{ \prod_{i=1}^d [ m_i s_i, (m_i+1) s_i] \mid m \in \ZZ^d \right\}.\]
Let $\cX \subset \cY$ be a finite subset.
For a set $\cA \subset \cX$, its \emph{realization} is
$|\cA| \coloneqq \bigcup \setof{ \xi \mid \xi \in \cA }
\subset \RR^d$.
Our goal is to compute a meaningful
Morse decomposition within a compact set $X:=|\cX|$.

A map $\cF$ from $\cX$ to its power set is written as $\cF \colon \cX \mvmap \cX$.
We consider it as a multivalued map on $\cX$ or as a directed graph which
has an edge from $\xi \in \cX$ to $\xi'\in \cX$ iff $\xi' \in \cF(\xi)$.
The \emph{image} of $\cA \subset \cX$ is
$\cF(\cA) := \bigcup_{\xi \in \cA} \cF(\xi) \subset \cX.$
For a subset $Z \subset X$, we let $\Int_X(Z)$ be the interior of $Z$
with respect to the subspace topology of $X\subset\RR^d$.

\begin{definition}
\label{defCombEnclosure}
A map $\cF \colon \cX \mvmap \cX$
is a \emph{combinatorial enclosure} of $f\colon \RR^d \to \RR^d$
if for every $\xi \in \cX$:
$ f(\xi)\cap X \subset \Int_X{|\cF(\xi)|}$.
\end{definition}

\begin{definition}
Let $\cN \subset \cX$.
\enumeration{
\item A \emph{full solution} through $\xi$ in $\cN$ is
  a sequence $ \Gamma \colon \ZZ \to \cN$ such that
  $\Gamma(0) = \xi$ and \mbox{$\Gamma(n+1) \in \cF(\Gamma(n))$} for all $n\in\ZZ$.
\item
  $ \Inv(\cN,\cF) := \setof{ \xi \in \cN \mid \text{there is a full solution through $\xi$ in $\cN$} }. $
}
\end{definition}

We apply the algorithms and software described in~\cite{AKKMOP}
and~\cite{BGHKMOP} for finding
Morse decompositions (Definition~\ref{def:MorseDecomp}) as follows.
The software constructs a combinatorial
enclosure $\cF \colon \cX \rightrightarrows \cX$ of a discrete dynamical system
$f \colon \RR^d \to \RR^d$. Then it constructs
the strongly connected path components $\{ \cN(p) \mid p \in \cP \}$ of
the directed graph $\cF$ and builds the directed graph $\mathrm{MG}(\cF)$
with vertices $\cP$ and a directed edge from $p$ to $q$ if there
are $G \in \cN(p)$, $H \in \cN(q)$
and a path from $G$ to $H$ in $\cF$. This describes the dynamics of the discrete dynamical system $f$ because of the following Theorem 4.1 from~\cite{KMV1}.

\begin{theorem}
\label{theoremMorseDiscrete}
Let $X=|\cX|$ be an isolating neighborhood for $f$ and $S=\Inv(X,f).$ Then:
\enumeration{
\item Each set $|\cN(p)|$ is an isolating neighborhood for $f$.
\item $\mathrm{MG}(\cF)$ is a Morse graph for $f$ in the invariant set $S$
with Morse sets $\Inv(|\cN(p)|, f), p \in \cP$.
}
\end{theorem}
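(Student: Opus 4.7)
The plan is to reduce the continuous statement to its combinatorial counterpart via the combinatorial enclosure property, and then to exploit the graph-theoretic structure of strongly connected components. The basic tool will be a \emph{transition lemma}: if $\xi \in \cX$ and $x \in \xi$ with $f(x) \in X$, then $f(x) \in \Int_X |\cF(\xi)|$, so any cube $\xi' \in \cX$ containing $f(x)$ must lie in $\cF(\xi)$ (otherwise $f(x)$ would belong to a cube disjoint from $\Int_X |\cF(\xi)|$, contradicting Definition~\ref{defCombEnclosure}). This lemma lets us lift any orbit of $f$ staying inside $X$ to a full solution of $\cF$.

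For part~(i), take $x \in \Inv(|\cN(p)|, f)$ and its full orbit $(f^n(x))_{n \in \ZZ} \subset |\cN(p)|$. For each $n$, every cube $\xi_n \in \cX$ containing $f^n(x)$ lies in $\cN(p)$: by the transition lemma, any such choice produces a full combinatorial solution $(\xi_n)$ in $\cX$, and since $\xi_n \in \cN(p)$ for at least one choice, strong connectedness forces every alternative choice (from the finitely many cubes containing $f^n(x)$) to lie in the same strongly connected component. Consequently a small enough ball around $x$ in $X$ meets only cubes of $\cN(p)$, giving $x \in \Int |\cN(p)|$.

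For part~(ii), disjointness of the Morse sets $\Inv(|\cN(p)|, f)$ follows from the pairwise disjointness of distinct strongly connected components of $\cF$. For the ordering axiom, I would pick $x \in S \setminus \bigcup_p \Inv(|\cN(p)|, f)$ and consider any combinatorial shadow $(\xi_n)_{n \in \ZZ}$ of its full orbit. The sequence $p(\xi_n)$ of component labels is monotone in the partial order induced by $\mathrm{MG}(\cF)$ precisely because components are maximal strongly connected, and finiteness of $\cP$ forces it to stabilize at some $p_-$ as $n \to -\infty$ and at some $p_+$ as $n \to +\infty$. Extracting convergent subsequences and invoking part~(i) places the $\alpha$- and $\omega$-limit sets of $x$ in $\Inv(|\cN(p_-)|, f)$ and $\Inv(|\cN(p_+)|, f)$ respectively, and the fact that $x$ is not itself in a Morse set yields $p_- \neq p_+$ with a directed path from $p_-$ to $p_+$ in $\mathrm{MG}(\cF)$.

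The main obstacle is the boundary issue underlying the transition lemma: a trajectory point may lie on a shared face of several cubes, making the combinatorial shadow nonunique. The delicate step is to show that this ambiguity cannot straddle the boundary between two strongly connected components, so that the label $p(\xi_n)$ is well defined along the orbit. This is where the interior condition $f(\xi)\cap X \subset \Int_X |\cF(\xi)|$ in the definition of combinatorial enclosure — as opposed to mere containment — does the real work, and carrying it through carefully is the bulk of the technical effort.
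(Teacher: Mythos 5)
The paper itself offers no proof of this statement: it is quoted verbatim as Theorem 4.1 of \cite{KMV1}, so there is no in-paper argument to compare against. Your outline is the standard proof of that theorem (lift orbits of $f$ to full solutions of $\cF$ via the enclosure property, then exploit maximality and disjointness of the strongly connected components), and the point you single out as the crux --- that the combinatorial shadow of an orbit cannot straddle two components --- is indeed where the work lies. The clean way to close it is the cycle argument: if $\xi_n$ and $\xi_n'$ both contain $\gamma(n)$ and $\xi_{n-1},\xi_{n+1}\in\cN(p)$, then the transition lemma gives edges $\xi_{n-1}\to\xi_n'\to\xi_{n+1}$, and a return path from $\xi_{n+1}$ to $\xi_{n-1}$ inside $\cN(p)$ puts $\xi_n'$ on a cycle through $\cN(p)$, hence in $\cN(p)$ by maximality. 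Modulo spelling that out, part (i) and the disjointness of the Morse sets are fine (for the interior statement you need the interior in $\RR^d$ rather than in $X$, which you get because $x\in S\subset\Int X$).

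The one genuine gap is in the treatment of limit sets in part (ii). First, "monotone labels stabilize by finiteness" is not quite the right mechanism: not every $\xi_n$ carries a component label, so one should argue by pigeonhole that some cube recurs infinitely often in the forward tail, which closes cycles and forces $\xi_n\in\cN(p_+)$ for all large $n$ and a single nontrivial component $\cN(p_+)$; then $\omega(y,f)\subset\Inv(|\cN(p_+)|,f)$ by invariance of $\omega$-limit sets. More seriously, you speak of "its full orbit" as if it were unique, but $f$ need not be injective, and the paper's definition of $\alpha(y,f)$ quantifies over \emph{all} backward solutions through $y$. Fixing one backward solution and one combinatorial shadow only shows that the accumulation set of that particular solution lies in some $M_{p_-}$; to conclude $\alpha(y)\subset M_{p_-}$ for a single $p_-$ you must either run the argument uniformly over every backward solution and explain why they all terminate in the same component, or work in a setting (e.g.\ $f$ invertible, as for $\phi_\timestep$) where the backward orbit is unique. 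Acyclicity of $\mathrm{MG}(\cF)$ --- immediate from maximality of the components --- also goes unmentioned, but that is harmless.
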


In the rest of this section, we give an example flow $\phi$ and show
that this algorithm yields a finer output when using $f(x)=\phi(\tau(x),x)$ than
when using $f(x)=\phi(h,x)$.
The justification that our outputs are indeed Morse decompositions for $\phi$
is formulated in Theorem~\ref{theoremMorseDecompCriteria}.
Every norm $\|.\|$ in this article is Euclidean,
i.e., $\|x\|^2 =\sum_{i=1}^dx_i^2$ for $x\in \RR^d$.

\subsection{Fixed time step}
\label{ssec:fixedStep}
The following ordinary differential equation is particularly challenging to analyze with a fixed
time step $\timestep$.
\begin{align}
\begin{split}
\label{ODE_2limitCycles}
\dot{x}_1 &= v_1(x) = -x_2 + x_1 (x_1^2 + x_2^2-\mu) (x_1^2+x_2^2-1) \\
\dot{x}_2 &= v_2(x) = x_1 + x_2 (x_1^2 + x_2^2-\mu) (x_1^2+x_2^2-1)
\end{split}
\end{align}
The equation has a fixed point $(0,0)$ and limit cycles with radius $1$ and $\sqrt{\mu}$ around the fixed point. This can be seen by its representation in polar coordinates:
\begin{align}
\begin{split}
\dot{r} &= r(r^2-\mu)(r^2-1) \\
\dot{\theta} &= 1
\end{split}
\end{align}

\begin{figure}
  \centering
  \begin{subfigure}[t]{0.45\textwidth}
  \centering
  \resizebox{\textwidth}{!}{
  \begin{tikzpicture}[scale=1]
    \begin{axis}[
      scale only axis,
      enlargelimits=false,
      axis on top
      ]
      \addplot graphics[xmin=-3,xmax=3,ymin=-3,ymax=3] {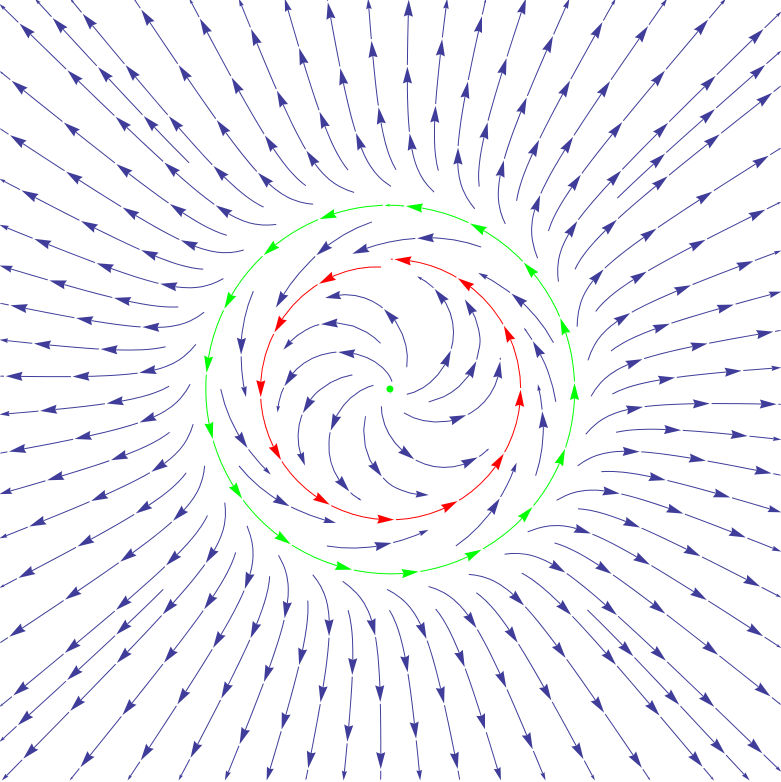};
    \end{axis}
  \end{tikzpicture}
  }
  \caption{Streamplot for Equation (\ref{ODE_2limitCycles})}
  \end{subfigure}
  \quad \quad
  \begin{subfigure}[t]{0.45\textwidth}
  \centering
  \resizebox{\textwidth}{!}{
  \begin{tikzpicture}[scale=1]
    \begin{axis}[
      scale only axis,
      enlargelimits=false,
      axis on top
      ]
      \addplot graphics[xmin=-3,xmax=3,ymin=-3,ymax=3] {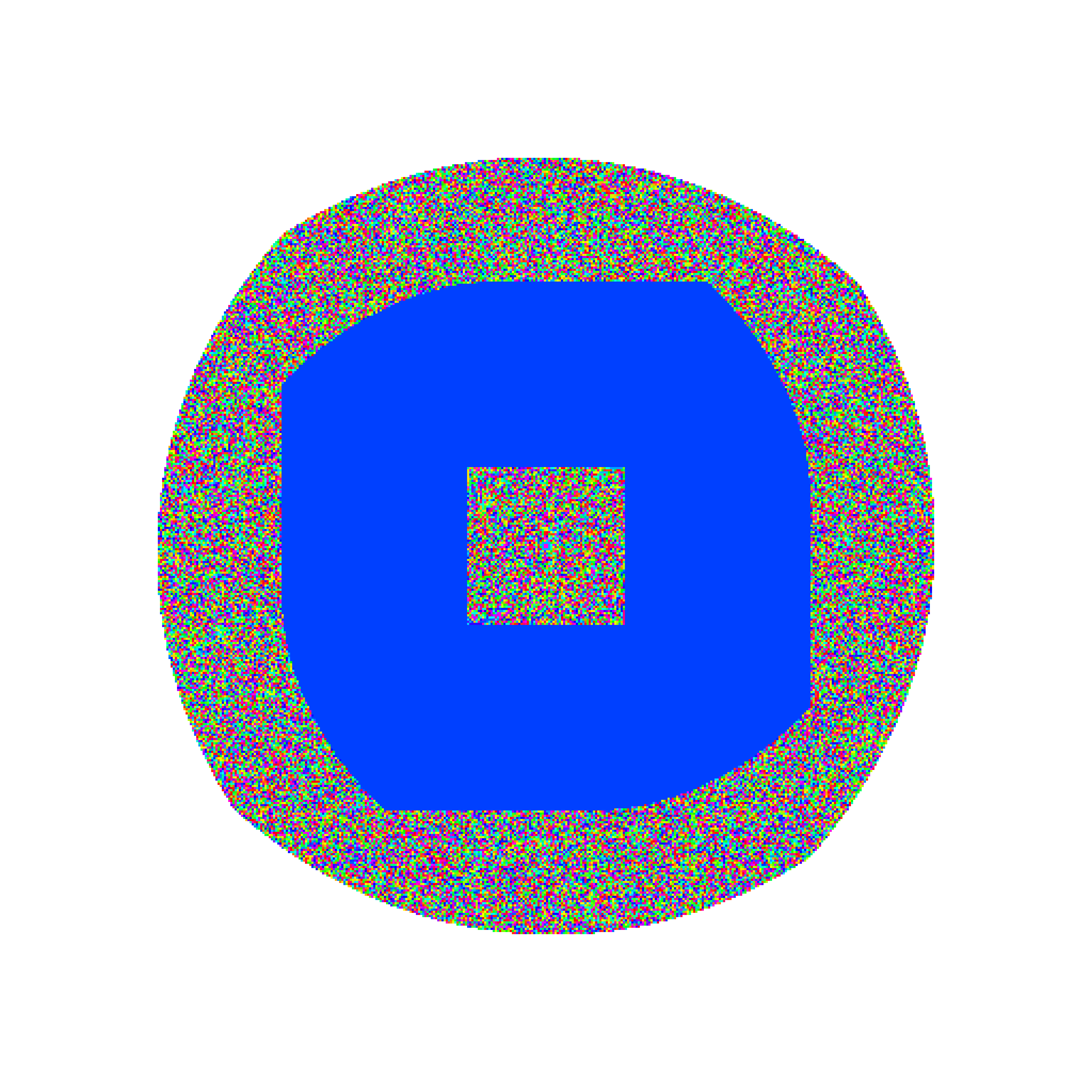};
    \end{axis}
  \end{tikzpicture}
  }
  \caption{The combinatorial Morse sets}
  \label{SubfigFixedT}
  \end{subfigure}
\caption{Visualization of the flow from Equation (\ref{ODE_2limitCycles}) for $\mu=2$
in $X=[-3,3]\times[-3,3]$ and
outputs when using the fixed time step $\timestep=0.0006$ as described
in Section~\ref{ssec:fixedStep}. There are $57\,673$ spurious
combinatorial Morse sets. Most of them consist of just one box. Calculating the Morse
graph was impossible because of memory problems.}
\label{FigVectorFieldVis}
\end{figure}

The norm of the vector field $v$ increases quickly away from the origin because
\[\| v(x) \| = \sqrt { r^2(r^2-\mu)^2(r^2-1)^2+r^2 } \approx r^5 \text{ for large $\|x\|$}.\]
Far away from the origin, the solutions
behave like the solutions of $\dot{r}(t) = r(t)^5$. This equation is solved by
\[ r(t) = \frac{1}{\sqrt{2}\sqrt[4]{\left(\sqrt{2} \cdot r_0\right)^{-4}-t}}, \]
where $r_0 = r(0).$
Note that the function $r$ is only defined for $t<(\sqrt{2}\cdot r_0)^{-4}=(x_1^2+x_2^2)^{-2}/4.$
The trajectories reach infinity after finite time. For a point $x$ with $\|x\|>\sqrt{\mu}$,
the solution for the system (\ref{ODE_2limitCycles})
is defined on a maximal open interval with right bound $T_+(x) < \infty.$
For $\timestep>T_+(x)$, the value $\phi_\timestep(x)$ is undefined.
Hence, also our integration algorithm
fails when the input is a box containing $x$ and we ask it to integrate until
time $\timestep > T_+(x).$
The fixed time step strategy can only be used with a parameter
$\timestep < \min \setof{T_+(x) \mid x \in X}$.
Therefore, when $X=[-3,3]^2$, we have to choose
\[ \timestep < T_+((3,3)) \approx (3^2+3^2)^{-2}/4 = 1/1296 \approx 0.0007716. \]
If $\timestep$ is chosen larger, each box $\xi \in \cX$ near the boundary of $X$ is assigned
an arrow in $\cF_\timestep$ to all of the other boxes because the algorithm
fails to find an enclosure of $\phi_\timestep(\xi)$ (which does not even exist).
This would lead to a very large
invariant part $\Inv(\cX, \cF_\timestep)$ and should therefore be avoided.
But when choosing $\timestep$ small enough, the directed graph $\cF_\timestep$ has a lot
of small strongly connected components whose corresponding invariant set of $\phi$
is empty (so-called \emph{spurious Morse sets}).
They have to occur when the time step
$\timestep$ is so small that $\phi(\timestep,\xi) \cap \xi \neq \varnothing$.
But this happens easily near the origin where $\|v(x)\|$ is small.

The outputs for $\mu=2$ are shown in Figure~\ref{SubfigFixedT}.
The algorithm as described in~\cite{BGHKMOP}
subdivided each dimension of $|\cX|$ into $2^9$
intervals of equal length.
Each colored region is a combinatorial Morse set $\cN(p), p \in \cP$.
The index set $\cP$ had $57\, 675$ elements. All but two of the
combinatorial Morse sets found are empty.

It took 27 seconds
on a laptop with an Intel i5 CPU to find the combinatorial Morse sets, but
the Morse graph could not be computed because $\cP$ was too large for the memory.
Also subdividing each dimension into $2^{12}$ intervals did give similarly bad outputs
(after 2396 seconds).

\subsection{Variable time step}
\label{ssec:variableStep}

We use the following heuristic for the time step function $\stepfunction$.
For each subdivision level, the Euclidean norm $\|s\|$ is the diagonal of each
of the congruent boxes $\xi \in \cX$.
Using parameters $D>1$ and $\delta>0$, we define the continuous function
\begin{equation}
\label{EqH}
\stepfunction \colon X \to \RR_+, \quad x \mapsto \frac{D\|s\|}{\|v(x)\|+\delta}.
\end{equation}
The idea is that the distance between $x$ and $\phi(\stepfunction(x),x)$ should be around $D$ box diagonals
-- using a first-order approximation $\phi(\stepfunction(x),x) \approx x + \stepfunction(x) v(x)$.
The number $\delta$ ensures that $\tau$ is also defined when $v(x)=0$.
Since $\stepfunction$
is usually not constant within a box $\xi$, the value $\stepfunction(\xi)$ is an interval. We can
find an enclosure of this interval by considering $\stepfunction$ as a function on
intervals and replacing $x$ by the box $\xi$ when calculating in interval arithmetic.
The software library CAPD \cite{CAPD} is used to construct
a combinatorial enclosure $\cF_\stepfunction \colon \cX \mvmap \cX$ of $\phi_\stepfunction$.
We use $D=4$ and $\delta=0.1$ in the following numerical example.

The varying time step
strategy proposed above yields the finest Morse decomposition of
the flow in $S = \Inv([-3,3]^2,\phi) = \setof{ x \mid \| x \| \leq \sqrt{\mu} }$
(more precisely, the finest one which does not
contain empty invariant sets). The output is shown in Figure~\ref{FigCompare2}.

Finding the Morse decomposition and the Morse graph took 29 seconds on the same hardware as in the
first example, but with each dimension subdivided into only $2^8$ intervals.
The algorithm additionally checks the correctness of the computed Morse decomposition.
This verification and its output in Figure~\ref{SubfigCriterionInv}
are described in Remark~\ref{numericsCriterionB}.

\begin{figure}
  \begin{subfigure}[t]{0.4\textwidth}
  \centering
  \resizebox{\textwidth}{!}{
  \begin{tikzpicture}[scale=1]
    \begin{axis}[
      scale only axis,
      enlargelimits=false,
      axis on top
      ]
      \addplot graphics[xmin=-3,xmax=3,ymin=-3,ymax=3] {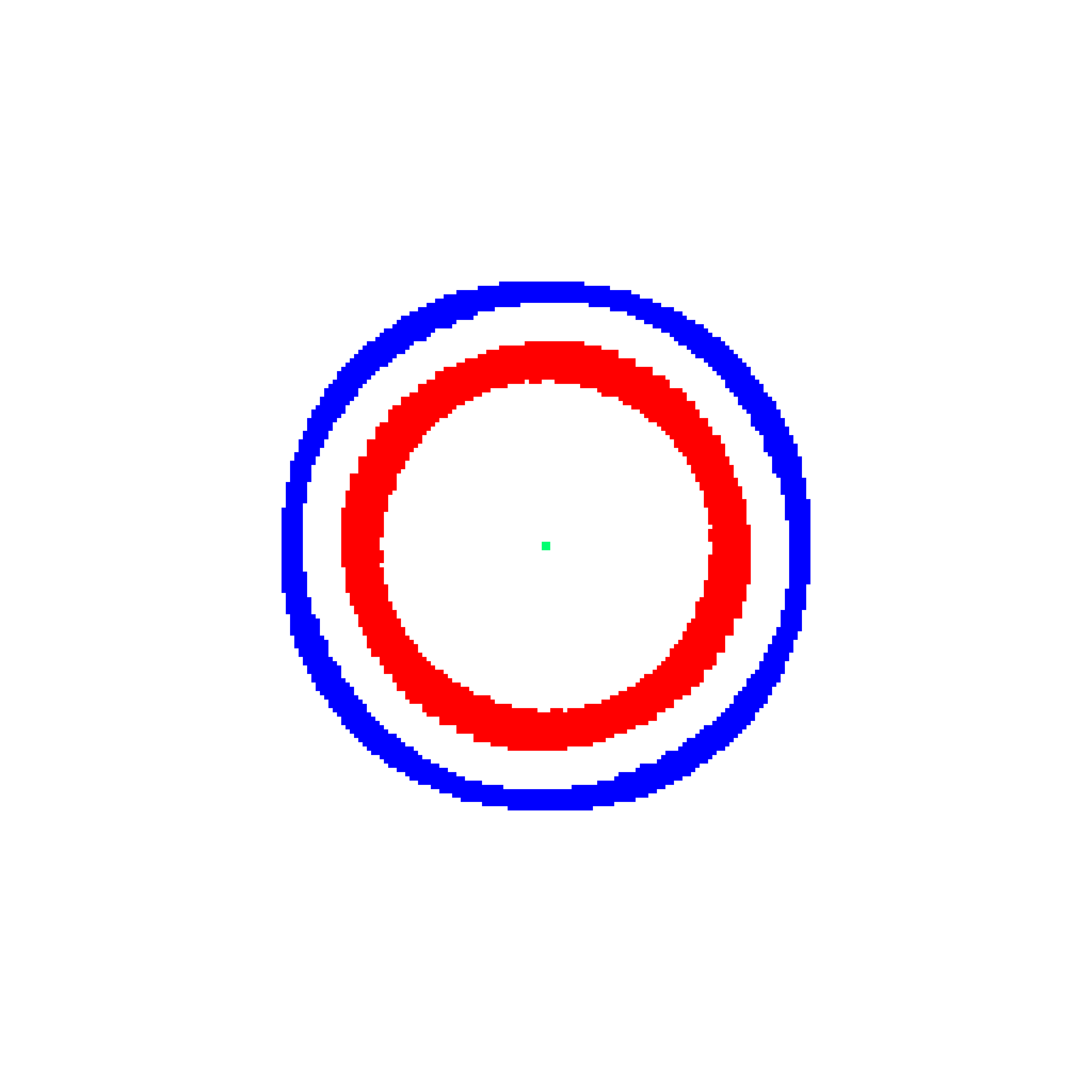};
    \end{axis}
    \node at (4,3.25) {\Large $\cN(1)$};
    \node at (4.4,4.3) {\Large $\cN(3)$};
    \node at (3,5.5) {\Large $\cN(2)$};
    \node at (8,6.7) {\Large $\cX$};
  \end{tikzpicture}
  }
  \caption{The combinatorial Morse sets}
  \label{SubfigVaryingT}
  \end{subfigure}
  \begin{subfigure}[t]{0.15\textwidth}
  \centering
  \resizebox{\textwidth}{!}{
  \begin{tikzpicture}[scale=0.5]
    \node (1) at (-1,2) {\fcolorbox{green}{white}{$1$}};
    \node (2) at (1,2) {\fcolorbox{blue}{white}{$2$}};
    \node (3) at (0,0) {\fcolorbox{red}{white}{$3$}} edge [very thick, <-] (1) edge [very thick, <-] (2);
  \end{tikzpicture}
  }
  \caption{$\mathrm{MG}$}
  \end{subfigure}
  \begin{subfigure}[t]{0.4\textwidth}
  \centering
  \resizebox{\textwidth}{!}{
  \begin{tikzpicture}[scale=1]
    \begin{axis}[
      scale only axis,
      enlargelimits=true,
      axis on top
      ]
      \addplot graphics[xmin=-1.5,xmax=1.5,ymin=-1.5,ymax=1.5] {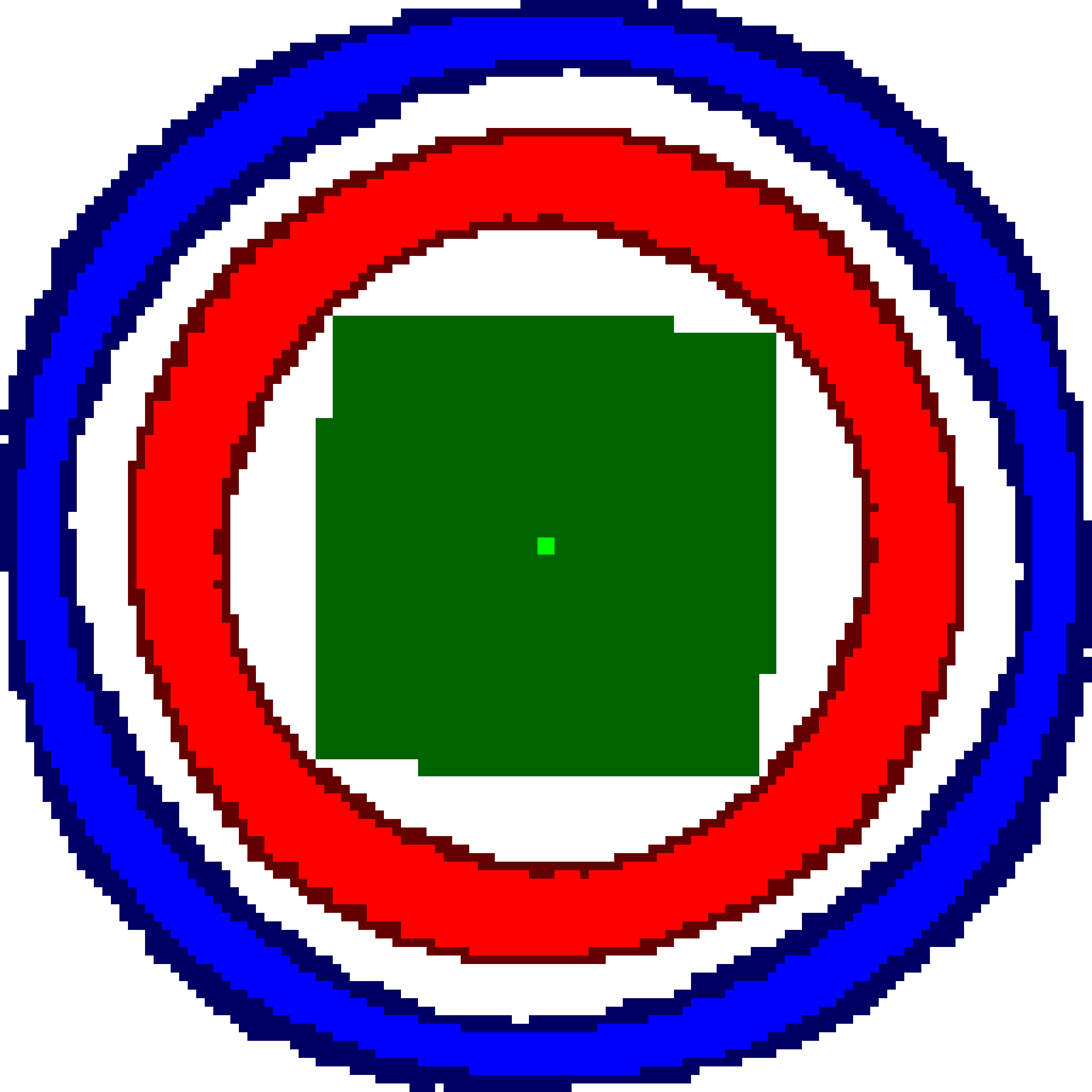};
    \end{axis}
    \node at (4,2.8) {\Large $\cZ(1)$};
    \node at (4.7,5.6) {\Large $\cZ(3)$};
    \node at (1.6,6.5) {\Large $\cZ(2)$};
  \end{tikzpicture}
  }
  \caption{Check of criterion~\ref{critVaryingStep}.}
  \label{SubfigCriterionInv}
  \end{subfigure}
\caption{Output for the same system as in Figure~\ref{FigVectorFieldVis},
  but using time step function $\stepfunction$
     from Equation~(\ref{EqH}) as proposed in Section~\ref{ssec:variableStep}.}
\label{FigCompare2}
\end{figure}

\section{Theoretical justification}
\label{sec:theory}

For the remainder of the paper we let $Y$ be a locally compact separable metric space.
Let $\phi\colon \RR\times Y \to Y$ be a flow.
We show when certain invariants for
$\phi_\stepfunction$ also yield the corresponding invariants for $\phi$.

\subsection{Isolating neighborhoods}

\begin{definition}
Let $f \colon Y \to Y$ be a continuous map.
\enumeration{
  \item A \emph{solution} through a point $x\in Y$ is a
  map $\gamma \colon \mathbb{Z} \to Y$ such that $\gamma(0)=x$ and
  $\gamma(n+1) = f(\gamma(n))$ for
  all $n\in\ZZ$.
  \item
  For $N\subset Y$ let
  $ \Inv(N,f) := \setof{ x \in N \mid \text{there is a solution $\gamma$ through $x$ such that $\gamma(\ZZ)\subset N$} }.$
  \item $N$ is an \emph{isolating neighborhood} of the
  \emph{isolated invariant set} $S$ if $S = \Inv(N,f) \subset \Int N.$
}
\end{definition}

Even though there is an inverse $\phi_{-\timestep}$ of $\phi_\timestep$ for any flow
$\phi$, there need not be an inverse for $\phi_\stepfunction$.

\begin{lemma}
\label{lemmaSubtrajectory}
Let $x\in Y$ and suppose that $\stepfunction(\phi(\RR,x)) \subset \RR^+$ is bounded.
Then there is a solution $\gamma \colon \mathbb{Z} \to Y$
of the discrete system $\phi_\stepfunction\colon Y \to Y$
such that $\gamma(0)=x$ and $\gamma(\ZZ) \subset \phi(\RR,x)$.
\end{lemma}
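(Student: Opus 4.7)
The plan is to construct $\gamma$ separately on the non-negative and non-positive integers, parametrizing each $\gamma(n)$ as $\phi(T_n, x)$ for a suitable sequence of real times $T_n$. The forward half is immediate by iteration; the backward half requires, at each step, finding a $\phi_\tau$-preimage of the previous term inside the flow orbit $\phi(\RR, x)$, and this is where the boundedness hypothesis on $\stepfunction$ enters.

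For the forward part, set $T_0 := 0$, $\gamma(0) := x$, and inductively $T_{n+1} := T_n + \stepfunction(\gamma(n))$ and $\gamma(n+1) := \phi(T_{n+1}, x)$ for $n \geq 0$. A one-line induction using the flow property $\phi(a+b, x) = \phi(a, \phi(b,x))$ shows $\gamma(n+1) = \phi_\stepfunction(\gamma(n))$ and $\gamma(n) \in \phi(\RR, x)$.

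For the backward part, I proceed by induction on $n \geq 0$: assume $\gamma(-n) = \phi(T_{-n}, x)$ is already defined. I need to pick $T_{-n-1} < T_{-n}$ and set $\gamma(-n-1) := \phi(T_{-n-1}, x)$ in such a way that $\phi_\stepfunction(\gamma(-n-1)) = \gamma(-n)$. Expanding, this amounts to solving
\[
\stepfunction\bigl(\phi(s, x)\bigr) + s = T_{-n}
\]
for some $s < T_{-n}$. Consider
\[
g \colon \RR \to \RR, \quad g(s) := \stepfunction\bigl(\phi(s, x)\bigr) + s - T_{-n}.
\]
It is continuous since $\stepfunction$ is continuous and $\phi(\cdot, x)$ is continuous. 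At $s = T_{-n}$, $g(T_{-n}) = \stepfunction(\gamma(-n)) > 0$ because $\stepfunction$ is strictly positive. On the other hand, the hypothesis $\stepfunction(\phi(\RR, x)) \subset \RR^+$ bounded gives some $M > 0$ with $\stepfunction(\phi(s, x)) \leq M$ for all $s \in \RR$, so $g(s) \leq M + s - T_{-n} \to -\infty$ as $s \to -\infty$. By the intermediate value theorem there exists $T_{-n-1} \in (-\infty, T_{-n})$ with $g(T_{-n-1}) = 0$; the inequality $T_{-n-1} < T_{-n}$ then follows from $\stepfunction > 0$ directly out of the equation $T_{-n-1} = T_{-n} - \stepfunction(\phi(T_{-n-1}, x))$.

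Having obtained the full sequence $(T_n)_{n \in \ZZ}$ and defined $\gamma(n) := \phi(T_n, x)$, the properties $\gamma(0) = x$, $\gamma(\ZZ) \subset \phi(\RR, x)$, and $\gamma(n+1) = \phi_\stepfunction(\gamma(n))$ for all $n \in \ZZ$ hold by construction. The principal obstacle is isolated in the backward induction: without the assumed upper bound on $\stepfunction$ along the orbit, $g$ need not tend to $-\infty$ at $-\infty$ and the IVT step can fail, so the boundedness hypothesis is genuinely used and not cosmetic. Continuity of $\stepfunction$ and of $\phi$ together supply the rest.
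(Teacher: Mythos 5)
Your proof is correct and follows essentially the same route as the paper's: the forward half by direct iteration, and the backward half by applying the intermediate value theorem to $s \mapsto \stepfunction(\phi(s,x)) + s - T_{-n}$, which is exactly the function $g$ the paper uses (up to a sign convention in naming the times), with the boundedness hypothesis entering in the same place to force $g \to -\infty$. No further comment is needed.
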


\begin{proof}
Define $\gamma(n) = \phi_\stepfunction^n(x)$ for $n\geq 0.$
Let $n<0$ and assume that $\gamma(n+1)$ is already constructed.
Then there is an $s\leq 0$ such that $\gamma(n+1)=\phi(s,x).$
Define the function
\[ g\colon \RR \to \RR, \quad t \mapsto \stepfunction(\phi(t,x))-s+t. \]
We have $g(s)>0$ and and since $t\mapsto \stepfunction(\phi(t,x))-s$ is bounded,
$g(t)<0$ for $t$ sufficiently small.
Hence, by the intermediate value theorem, there is a $t'<s$ such that $g(t')=0$.
Choose $\gamma(n):=\phi(t',x).$
Then
\[
  \phi(\tau(\gamma(n)),\gamma(n)) = \phi(s-t',\gamma(n)) = \phi(s,x) = \gamma(n+1).
\]
\end{proof}

The following theorem slightly generalizes \cite[Theorem~1]{MM1}.
\begin{theorem}
\label{theorem1n}
Let $S\subset Y$ be compact.
Then the following three conditions are equivalent:
\enumeration{
  \item $S$ is an isolated invariant set with respect to $\phi$.
  \item For every continuous map $\stepfunction\colon Y \to \RR^+$,
    $S$ is an isolated invariant set with respect to $\phi_\stepfunction$.
  \item There is a number $\timestep > 0$ such that $S$ is an isolated invariant
    set with respect to $\phi_\timestep$.
}
\end{theorem}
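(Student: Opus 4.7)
The plan is to prove the cycle \mbox{(i) $\Rightarrow$ (ii) $\Rightarrow$ (iii) $\Rightarrow$ (i)}. Two implications are almost free: (ii) $\Rightarrow$ (iii) is witnessed by the constant function $\tau\equiv h$ for any $h>0$, and (iii) $\Rightarrow$ (i) is the assertion of \cite[Theorem~1]{MM1}. So the content of the generalization lives in (i) $\Rightarrow$ (ii), where I need to show that isolation under the flow passes to isolation under $\phi_\tau$ for an arbitrary continuous $\tau\colon Y\to\RR^+$.

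For (i) $\Rightarrow$ (ii) I will split the argument into invariance and isolation. For invariance, given $x\in S$ the orbit $\phi(\RR,x)\subset S$ is compact, so $\tau$ is bounded on it and Lemma~\ref{lemmaSubtrajectory} produces a $\phi_\tau$-solution $\gamma$ with $\gamma(\ZZ)\subset\phi(\RR,x)\subset S$; the reverse inclusion $\Inv(S,\phi_\tau)\subset S$ is trivial, so $S$ is $\phi_\tau$-invariant.

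For isolation, I start from a compact $\phi$-isolating neighborhood $N_0$ of $S$ and set $T:=\sup_{N_0}\tau<\infty$, which is finite by continuity of $\tau$ and compactness of $N_0$. I then pick a compact $N_1$ with $S\subset\Int N_1\subset N_1\subset\Int N_0$ (so that automatically $\Inv(N_1,\phi)=S$), and use continuity of $\phi$ together with $\phi([0,T],S)=S\subset\Int N_1$ to extract a compact neighborhood $N_2$ of $S$ inside $\Int N_1$ such that $\phi([0,T],N_2)\subset N_1$. The key observation is then that any $\phi_\tau$-solution $\gamma\colon\ZZ\to N_2$ through a point $x$ satisfies $\tau(\gamma(n))\leq T$ at every step, so each flow segment $\phi([0,\tau(\gamma(n))],\gamma(n))$ from $\gamma(n)$ to $\gamma(n+1)$ lies in $\phi([0,T],N_2)\subset N_1$; concatenating these segments recovers the entire flow orbit $\phi(\RR,x)\subset N_1$, whence $x\in\Inv(N_1,\phi)=S$. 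Combined with the invariance step this yields $\Inv(N_2,\phi_\tau)=S\subset\Int N_2$, as required.

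The main obstacle is precisely this construction of the smaller neighborhood $N_2$: because $\phi_\tau$-orbits only sample the flow at discrete times, a $\phi_\tau$-trajectory sitting in $N_1$ could in principle make excursions outside $N_1$ between successive iterates, which would block the passage to a full flow orbit. Choosing $N_2$ so that $\phi([0,T],N_2)\subset N_1$ is exactly the safeguard that rules this out, and finding it requires only that $\tau$ be bounded on the original isolating neighborhood — which is why compactness of $N_0$ and continuity of $\tau$ suffice. No growth or Lipschitz hypotheses on $\tau$ are needed beyond positivity and continuity.
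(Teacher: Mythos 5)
Your proposal is correct and follows essentially the same route as the paper's proof: invariance via Lemma~\ref{lemmaSubtrajectory}, and isolation by bounding $\tau$ by $T$ on a compact isolating neighborhood and shrinking to a compact neighborhood $N_2$ (the paper's $M$, obtained there via the exit-time function $\sigma$) on which $\phi([0,T],\cdot)$ stays inside the original neighborhood, so that $\phi_\tau$-solutions concatenate to full flow orbits. The remaining implications are handled identically (constant $\tau$ and the citation of \cite[Theorem~1]{MM1}).
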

\begin{proof}
 Assume condition (i) holds and fix a continuous map $\stepfunction \colon Y\to\RR^+$. Choose  $N$,
 an isolating neighborhood for $S$ with respect to $\phi$. Obviously $\phi_\stepfunction(S)\subset S$.
 Using Lemma \ref{lemmaSubtrajectory}, for every $x \in S$ there is an $x'\in S$
 such that $\phi_\stepfunction(x') =x$. This means
 $S \subset \phi_\stepfunction(S)$.
 Hence $S$  is invariant with respect to $\phi _\stepfunction.$
 Let $T:=\sup\setof{\stepfunction(x)\mid x\in N} < \infty$. To see that $S$ is an isolated
 invariant set with respect to $\phi_\stepfunction$, consider the map
 \[
 \sigma\colon N\ni x\mapsto \sup\setof{t\in \RR_{\geq 0}\mid \phi([0,t],x)\subset  N}\in [0,\infty].
 \]
 One easily verifies that each $x\in S$ has a compact neighborhood
 $V_x$ such that $\sigma(V_x)\subset[T,\infty]$.
 Since $S$ is compact we can
 choose $M\subset N,$ a compact neighborhood of $S$ such that  $\sigma (x)\geq T$  for $x\in M.$
 We will show that $S=\Inv(M,\phi_\stepfunction)$.
 Obviously we have
 \[
 S=\Inv(N,\phi)=\Inv(M,\phi)\subset\Inv(M,\phi_\stepfunction).
 \]
 To show the opposite inclusion take $x\in\Inv(M,\phi_\stepfunction)$
 and let $\gamma \colon \ZZ \to Y$ be a solution of $\phi_\stepfunction$ through $x$.
 Let $x_n := \gamma(n)$ and $t_n:=\tau(x_n)$.
 Then
 \[
   x_{n+1}=\phi_\stepfunction^{n+1}(x)=\phi(t_n,\phi_\stepfunction^n(x))=
   \phi(t_n,x_n).
 \]
 By definition of $T$, we have $t_n\leq T$. Since $x_n\in M$, we have $\sigma(x_n)\geq T$.
 It follows that $\phi([0,t_n],x_n)\subset N$ for all $n$, and consequently
 $x\in\Inv(N,\phi)=S$. Thus implication $(i)\implies(ii)$ is proven.
 Implication $(ii)\implies(iii)$ is obvious because we can always take
 $\stepfunction$ to be a constant positive function.
 Implication $(iii) \implies (i)$ is part of Theorem~1 in \cite{MM1}.
\end{proof}

The following follows immediately from the implication $(iii) \implies (i)$.
\begin{corollary}[\cite{PP1}, Lemma 6]
\label{theorem_isolation}
Let $N \subset Y$ be an isolating neighborhood for $\phi_\timestep$. Then
\[ \Inv(N, \phi) = \Inv(N, \phi_\timestep).\]
\end{corollary}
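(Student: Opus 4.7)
The plan is to deduce this immediately from Theorem~\ref{theorem1n} together with a one-line sampling observation. Set $S := \Inv(N,\phi_\timestep)$. By hypothesis $N$ is an isolating neighborhood for $\phi_\timestep$, so $S \subset \Int N$ and $S$ is isolated invariant under $\phi_\timestep$. Applying the implication $(iii)\Longrightarrow(i)$ of Theorem~\ref{theorem1n} to this $S$ yields that $S$ is also an isolated invariant set with respect to the flow $\phi$; in particular $\phi(\RR,S)=S$.

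From $\phi(\RR,S)=S \subset N$ I would immediately conclude $S \subset \Inv(N,\phi)$, since every point of $S$ lies on a full $\phi$-trajectory contained in $S \subset N$. For the reverse inclusion, take any $x \in \Inv(N,\phi)$, so that $\phi(\RR,x)\subset N$, and consider the sampled orbit
\[
\gamma \colon \ZZ \to Y, \qquad \gamma(n) := \phi(n\timestep,x) = \phi_\timestep^n(x).
\]
Then $\gamma$ is a full solution of the discrete system $\phi_\timestep$ through $x$, and its image lies in $\phi(\RR,x)\subset N$. Hence $x \in \Inv(N,\phi_\timestep) = S$, and combining the two inclusions with the definition of $S$ gives $\Inv(N,\phi)=S=\Inv(N,\phi_\timestep)$.

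I do not anticipate any real obstacle here, which is consistent with the excerpt flagging this as an immediate corollary: Theorem~\ref{theorem1n} already absorbs all the nontrivial dynamical content, in particular the construction of discrete pre-images via Lemma~\ref{lemmaSubtrajectory} and the compactness/invariance argument upgrading isolation from $\phi_\timestep$ to $\phi$. What remains is only the elementary bookkeeping that a full $\phi$-trajectory in $N$ restricts, via sampling at integer multiples of $\timestep$, to a full $\phi_\timestep$-orbit in $N$.
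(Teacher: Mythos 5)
Your proposal is correct and matches the paper's intended argument: the paper itself only remarks that the corollary ``follows immediately from the implication $(iii)\implies(i)$'' of Theorem~\ref{theorem1n}, and your write-up simply supplies the two routine inclusions (flow-invariance of $S=\Inv(N,\phi_\timestep)$ giving $S\subset\Inv(N,\phi)$, and sampling at integer multiples of $\timestep$ giving the converse). No gaps; this is exactly the bookkeeping the authors left implicit.
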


\begin{remark}
There is no full analogue of Corollary~\ref{theorem_isolation} since
we cannot replace condition \emph{(iii)} in the theorem above by the statement
\emph{
\begin{itemize}
\item[(iii')] There is a continuous function $\stepfunction\colon X \to \RR^+$ such that $S$ is isolated invariant with respect to $\phi_\stepfunction$.
\end{itemize}
}

An example for which $(iii') \implies (i)$ is wrong
can be constructed by considering a system with a limit cycle
and using a time step function $\stepfunction$ which sends a certain
point $x'$ on the limit cycle
to itself under $\phi_\stepfunction$
by letting $\stepfunction(x')$ be the period of the orbit. With an appropriate choice of
$\stepfunction$ for $x$ near $x'$,
this yields an isolated fixed point $x'$ for $\phi_\stepfunction$,
but the set $S=\{x'\}$ would not be
invariant for $\phi$. We defer a detailed discussion of this example to
Remark \ref{remark_counterexample}, where it is discussed in the context of Morse decompositions.
\end{remark}

\subsection{Conley indices}
\label{sec:ConleyIndex}

Given an isolated invariant set of a flow or of a discrete dynamical system,
its Conley index is well-defined. Even though we are interested in the Conley index of a flow
(as described in \cite{C}), we propose using algorithms developed for
the calculation of Conley indices of maps.
Therefore, we need to consider both situations here.

\begin{definition}
\label{defIndexPairFlow}
Let $N\subset Y$ be compact and $S = \Inv(N,\phi)$.
A pair $(P_1,P_2)$ of compact sets $P_2 \subset P_1 \subset N$ is an \emph{index pair}
for $(S,\phi)$ in $N$ if it has the following properties:
\enumeration{
  \item If $x\in P_i,\, t>0,\, \phi([0,t],x) \subset N$, then $\phi([0,t],x) \subset P_i$;
  \item If $x \in P_1,\ t>0,\, \phi(t,x) \notin N$, then $\exists\, t' \in [0,t]: \phi(t',x) \in P_2,\, \phi([0,t'],x)\subset N$;
  \item $S \subset \Int(P_1 \setminus P_2)$.
}
\end{definition}

For an isolated invariant set $S$, an index pair exists and the following definition
does not depend on a particular choice of
the isolating neighborhood $N$ and the index pair $(P_1,P_2)$.
\begin{definition}
Let $(P_1,P_2)$ be an index pair for $(S,\phi)$ in $N$.
The \emph{homological Conley index} is $\Con{S,\phi} \coloneqq \Ho{P_1,P_2}.$
\end{definition}

There are several ways of defining the Conley index for maps.
Each definition uses a certain kind of index pair and
and equivalence relation on a map on this index pair (e.g., \cite{RS,FR,MM3}).
We use the following index pair from \cite[Definition 3.1]{MM2} here
and two popular equivalence relations in the subsections ~\ref{ssec:leray}
and~\ref{ssec:shift-equi}.
\begin{definition}
\label{defIndexPairMap}
Let $f\colon Y \to Y$ be a continuous map. Let $N\subset Y$ be compact
and $S=\Inv(N,f)$.
A pair $(P_1,P_2)$ of compact sets $P_2 \subset P_1 \subset N$ is an \emph{index pair}
for $(S,f)$ in $N$ if
\enumeration{
  \item $f(P_i) \cap N \subset P_i$,
  \item\label{propertyExitSet} $P_1 \setminus f^{-1}(P_1) \subset P_2$,
  \item $S \subset \Int{(P_1\setminus P_2)}.$
}
\end{definition}

Let $\mathrm{H}_*$ denote the homology functor with compact supports as defined in
\cite[Chapter 9]{massey1978}.
We use this version of the homology functor because of its good excision properties.
However, note that this homology theory coincides with singular homology
on compact neighborhood retracts,
in particular compact polyhedra and cubical sets.

Let $(P_1,P_2)$ be an index pair for $(S,f)$
in $N$. Define maps on topological pairs:
\begin{align*}
f_P \colon (P_1,P_2) \ni x &\mapsto f(x) \in (P_1 \cup (Y \setminus \Int{N}), P_2 \cup (Y \setminus \Int{N})); \\
i_P \colon (P_1,P_2) \ni x &\mapsto x \in (P_1 \cup (Y \setminus \Int{N}), P_2 \cup (Y \setminus \Int{N})).
\end{align*}
The induced homomorphism $\Ho{i_p}$ in relative homology is an isomorphism
because of excision (see, \cite[Theorem 9.3]{massey1978}).
The endomorphism $I_P = \Ho{i_P}^{-1} \circ \Ho{f_P}$ on $\Ho{P_1,P_2}$
is called the \emph{index map}.

In the next two subsections, we use the following notations for an
endomorphism $\alpha\colon V \to V$. Let $\dom(\alpha)=V$ denote the domain of $\alpha$
and $\gker(\alpha)$
the \emph{generalized kernel} defined by
\[\gker(\alpha)=\{ v\in V \mid \exists n\in\NN: \alpha^n(v)=0\}. \]

\subsubsection{Leray functor}
\label{ssec:leray}

Given an endomorphism $\alpha \colon V \to V$ of graded modules
(e.g. homology groups), the Leray functor $\Leray$ assigns to $\alpha$
an automorphism $\Leray(\alpha)$ as follows.
Let
\[\overline\alpha\colon V/\gker(\alpha) \to V/\gker(\alpha),\ [v] \mapsto [\alpha(v)].\]
Then let
\[ V':= \cap_{n\in\NN} {\overline\alpha}^n(V')\text{ and }
\Leray(\alpha) \colon V' \to V',\ [v]\mapsto \overline\alpha([v])=[\alpha(v)].\]
Observe that if $\alpha$ is already an automorphism, then $\Leray(\alpha)=\alpha$.
For a detailed definition of this functor,
we refer the reader to \cite[Section 4]{MM3}.
We define
$\alpha \sim \beta$ iff $\Leray(\alpha)$ and $\Leray(\beta)$ are conjugate.
Here we let the Conley index be the equivalence class of $\Leray(I_P)$ under this
equivalence relation.
This definition only depends on $S$ and not on the choice of $P$ or $N$.
\cite[Theorem 2.6]{MM3}.
For all the index maps $I_p$ considered in this article, the automorphism $\Leray(I_P)$
is conjugate to the identity as is shown later in this section.

The main ingredient for comparing the Conley index of $\phi$ with the
Conley index of $\phi_\timestep$ for some $h>0$ is the existence
of a common index pair for both dynamical systems.
The following lemma was verified in the proof of~\cite[Theorem 2]{MM1}.
\begin{lemma}
\label{lemma_index_pairs_flow_map}
Let $S$ be an isolated invariant set for $\phi_\timestep$ (hence for $\phi$).
Then there is a pair $Q'=(Q_1,Q_2)$ of compact sets such that $Q$
is an index pair for $(S,\phi)$ and an index pair for $(S,\phi_\timestep)$.
\end{lemma}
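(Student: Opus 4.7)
The plan is to construct $Q$ by starting from an index pair for the flow $\phi$ and thickening its exit set along flow-segments of length at most $\timestep$, so that a single application of $\phi_\timestep$ cannot leave the pair without first crossing the thickened exit set. By Corollary~\ref{theorem_isolation} and the implication $(iii)\Rightarrow(i)$ in Theorem~\ref{theorem1n}, $S$ is isolated invariant with respect to both $\phi_\timestep$ and $\phi$ inside a common compact isolating neighborhood $N$, so both dynamical systems can be treated on the same ambient set.

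First I would invoke the Conley--Easton isolating-block theorem for $\phi$ to obtain a compact isolating block $B\subset\Int N$ with immediate exit set $B^-\subset\partial B$; this yields a canonical index pair $(B,B^-)$ for $(S,\phi)$ in $N$. Then I would thicken $B^-$ by the flow up to time $\timestep$ inside $N$ and set
\[
Q_2 := \{\phi(s,y)\mid y\in B^-,\ s\in[0,\timestep],\ \phi([0,s],y)\subset N\},\qquad Q_1 := B\cup Q_2.
\]
Compactness of $Q_1$ and $Q_2$ follows from compactness of $B^-$ and $N$ together with continuity of $\phi$ on $[0,\timestep]\times N$.

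I would then verify the two sets of axioms separately. For Definition~\ref{defIndexPairFlow}: positive invariance of $Q_i$ inside $N$ follows from the isolating-block property of $B$ together with the definition of $Q_2$; the exit axiom holds because any flow segment leaving $N$ from $Q_1$ must first cross $B^-\subset Q_2$; and isolation is inherited from $(B,B^-)$ since $Q_2$ is a flow-collar disjoint from $S$. For Definition~\ref{defIndexPairMap}: the inclusion $\phi_\timestep(Q_i)\cap N\subset Q_i$ follows by a case analysis on whether $\phi([0,\timestep],x)$ stays in $N$ or escapes through $B^-$; axiom~\ref{propertyExitSet} holds because a point $x\in Q_1$ with $\phi_\timestep(x)\notin Q_1$ forces $\phi([0,\timestep],x)$ to exit $N$, which by the isolating-block property means $x\in Q_2$ already; and isolation is immediate since $S\cap Q_2=\varnothing$.

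The main obstacle I expect is carrying out the flow positive-invariance condition (i) of Definition~\ref{defIndexPairFlow} for $Q_2$: one must show that a trajectory starting in $Q_2$ and remaining in $N$ stays in $Q_2$ for \emph{all} positive time, not merely for time up to $\timestep$. This is exactly where the isolating-\emph{block} structure of $B$ is needed rather than the weaker isolating-neighborhood structure of $N$: a trajectory which exits $B$ through $B^-$ cannot re-enter $B$ without first leaving $N$, so once a thickened-collar trajectory would exit the collar it must already have escaped $N$. Once this point is settled, the remaining verifications reduce to uniform continuity of $\phi$ on $[0,\timestep]\times N$ and are routine.
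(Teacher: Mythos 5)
Your construction has a genuine gap, and it sits exactly at the step you flag as ``the main obstacle.'' The claim that ``once a thickened-collar trajectory would exit the collar it must already have escaped $N$'' is false: since $B \subset \Int N$, the exit set $B^-\subset\partial B$ is at positive distance from $\partial N$, and an orbit leaving $B$ through $B^-$ can remain in $\cl(N\setminus\Int B)$ for a time up to some $T^*>0$ that depends only on $B$ and $N$ --- not on $\timestep$. If $\timestep<T^*$, take $y\in B^-$ whose orbit stays in $N$ on $[0,\timestep+\delta]$ without re-entering $B$; then $\phi(\timestep+\delta,y)\in N$ but lies in neither $B$ nor $Q_2$ (its backward orbit meets $B^-$ only at time $\timestep+\delta>\timestep$), so condition~(i) of Definition~\ref{defIndexPairFlow} fails for both $Q_1$ and $Q_2$, and condition~(i) of Definition~\ref{defIndexPairMap} fails as well. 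A concrete counterexample: $\dot x = x$ on $\RR$, $S=\{0\}$, $N=[-10,10]$, $B=[-1,1]$, $\timestep=\ln 2$; then $Q_2=[-2,-1]\cup[1,2]$, $Q_1=[-2,2]$, and $\phi_\timestep(Q_1)\cap N=[-4,4]\not\subset Q_1$, while $\phi(t,2)$ leaves $Q_1$ without leaving $N$. Your auxiliary claim that an orbit exiting $B$ through $B^-$ cannot re-enter $B$ inside $N$ is also not a consequence of the isolating-block property (it fails already for $\dot x=-x$ with $B=[-1,1]\cup[2,3]$, $N=[-4,4]$), although that claim is not the decisive problem. Repairing the construction is not routine: flow-saturating the collar until orbits leave $N$ restores the flow axioms but then breaks axiom~\ref{propertyExitSet} of Definition~\ref{defIndexPairMap}, because a point of $\Int B$ whose orbit leaves $N$ within time $\timestep$ need not lie on the forward orbit of $B^-$.

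For comparison, the paper does not give a collar construction at all: it cites \cite[Theorem~2]{MM1}, and the argument (reproduced in the proof of the more general Lemma~\ref{lemma_index_pairs_flow_map_n}) builds $Q_1$ and $Q_2$ as sublevel and level-band sets of Rybakowski's pair of functions $\kappa,\lambda$, which are strictly monotone along orbits where positive. Monotonicity makes positive invariance relative to $N$ hold for \emph{all} positive time automatically, and the parameters $\epsilon,\delta$ are chosen quantitatively against the time bound ($T=\timestep$, or $T=\max\stepfunction$ in the general case) via $\phi([0,T],H(\delta))\subset G(\epsilon)$ and $\phi([0,T],H(\epsilon))\subset V$, which is what delivers the map axioms. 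If you want to keep an isolating-block starting point, you would need to replace the time-$\timestep$ collar by level sets of such a Lyapunov-type function (or otherwise enlarge $Q_2$ to contain every point of $Q_1$ whose orbit leaves $N$ within time $\timestep$), which essentially reproduces the argument of \cite{MM1}.
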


We also have a slightly more general statement for
a time step function $\stepfunction$, but we need an extra assumption
about the equality of the invariant sets with respect to $\phi$ and $\phi_\stepfunction$.
\begin{lemma}
\label{lemma_index_pairs_flow_map_n}
Let $S$ be an isolated invariant set for $\phi_\stepfunction$ and also for $\phi$. Then
there are compact sets $Q_1,Q_2$ and $M$ such that
\enumeration{
\item $Q=(Q_1,Q_2)$ is an index pair for $(S,\phi)$ and for $(S,\phi_\stepfunction)$ in $M$, and
\item the index map $I_Q$ is the identity on $\Ho{Q_1,Q_2}$.
}
\end{lemma}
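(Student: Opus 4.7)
The plan is to adapt the proof of Theorem~2 in~\cite{MM1}, which underlies Lemma~\ref{lemma_index_pairs_flow_map}, by replacing the constant time step $\timestep$ there with the uniform bound $T := \sup_{x \in M_0} \stepfunction(x)$ of the continuous function $\stepfunction$ over a compact common isolating neighborhood $M_0$ of $S$. Such an $M_0$ exists by the hypothesis of the lemma (intersect isolating neighborhoods for $\phi$ and $\phi_\stepfunction$ and shrink so that $S$ still lies in its interior), and $T < \infty$ by continuity of $\stepfunction$ and compactness of $M_0$.

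Next I would reproduce the argument from the proof of Theorem~\ref{theorem1n} to find a compact neighborhood $M \subset \Int M_0$ of $S$ enjoying the flow-tube property $\phi([0,T], x) \subset M_0$ for every $x \in M$. With this in hand, I would invoke the construction of~\cite{MM1}, with $T$ in place of $\timestep$, to produce compact sets $Q_1 \subset M$ and $Q_2$ such that $(Q_1, Q_2)$ is simultaneously an index pair for $(S, \phi)$ in the sense of Definition~\ref{defIndexPairFlow} and an index pair for $(S, \phi_\stepfunction)$ in the sense of Definition~\ref{defIndexPairMap}, both in $M_0$. Verification of the two sets of axioms then follows the same pattern as in~\cite{MM1}: every $\phi_\stepfunction$-step from a point of $Q_1$ corresponds to a flow segment of length at most $T$ inside $M_0$, so the flow index-pair conditions translate directly into the map conditions.

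For the second claim, I would consider the pair-homotopy
\[
  H \colon (Q_1, Q_2) \times [0,1] \to (Q_1, Q_2), \qquad H(x, s) := \phi(s\,\stepfunction(x),\, x).
\]
For $x \in Q_i \subset M$, the orbit $\phi([0, \stepfunction(x)], x)$ is contained in $\phi([0, T], x) \subset M_0$, so condition~(i) of Definition~\ref{defIndexPairFlow} forces $\phi([0, \stepfunction(x)], x) \subset Q_i$; hence $H$ really does take values in $(Q_1, Q_2)$. Since $H(\cdot, 0) = i_Q$ and $H(\cdot, 1) = f_Q$ as maps into the target pair appearing in the definition of $I_Q$, one obtains $\Ho{f_Q} = \Ho{i_Q}$ and therefore $I_Q = \Ho{i_Q}^{-1} \circ \Ho{f_Q} = \id$ on $\Ho{Q_1, Q_2}$.

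The main obstacle is the joint construction of $(Q_1, Q_2)$ inside the flow-tube region: $Q_1$ must be kept inside $M$, while $Q_2$ must simultaneously be thick enough to absorb the one-step jumps of $\phi_\stepfunction$ (each of flow length at most $T$) so that condition~(ii) of Definition~\ref{defIndexPairMap} holds. This balancing is exactly what is carried out in~\cite{MM1} for constant $\timestep$, and the same argument applies here with $T$ playing the role of $\timestep$ throughout.
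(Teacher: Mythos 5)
Your overall strategy coincides with the paper's: replace the constant step $\timestep$ by $T=\sup\stepfunction$ over a compact isolating neighborhood, rerun the index-pair construction from \cite{MM1} (which rests on Rybakowski's Lyapunov-type functions $\kappa,\lambda$), and then use the straight-line flow homotopy $(x,s)\mapsto\phi(s\stepfunction(x),x)$ to conclude $I_Q=\id$. The first half of your sketch is therefore fine at the level of detail the paper itself gives.

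There is, however, a genuine gap in your homotopy step. You claim that $H$ takes values in the pair $(Q_1,Q_2)$ itself, arguing that $\phi([0,\stepfunction(x)],x)\subset M_0$ for $x\in Q_i$ and that condition (i) of Definition~\ref{defIndexPairFlow} then forces $\phi([0,\stepfunction(x)],x)\subset Q_i$. But combining your three requirements --- $Q_1\subset M$, $\phi([0,T],x)\subset M_0$ for all $x\in M$, and relative positive invariance of $Q_1$ with respect to $M_0$ --- and iterating, one finds that the entire forward flow orbit of every point of $Q_1$ stays in $Q_1$. A compact, genuinely positively invariant $Q_1$ with $S\subset\Int(Q_1\setminus Q_2)$ cannot exist unless $S$ is an attractor (consider a repelling fixed point), so the configuration you posit is unattainable in general and the argument collapses. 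The correct statement, and the one the paper proves, is only that $H$ maps into the \emph{enlarged} pair $(Q_1\cup(Y\setminus\Int M),\,Q_2\cup(Y\setminus\Int M))$, which is the actual codomain of $i_Q$ and $f_Q$: orbit segments starting in $Q_1$ are allowed to leave $M$, and when they do they land in the $Y\setminus\Int M$ part. Moreover, this weaker containment does not follow from the index-pair axioms alone (an orbit could in principle leave $M$ and re-enter $\Int M$ at a point outside $Q_1$); the paper rules this out using the monotonicity of $\kappa$ and $\lambda$ along the flow, since $Q_1$ and $Q_2$ are sublevel/level-band sets of these functions and, e.g., once $\kappa$ exceeds $\epsilon$ it stays above $\epsilon$ for as long as the orbit remains in $V$. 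So the specific structure of the Rybakowski construction is genuinely needed in part (ii), not just in part (i); your sketch treats it as a black box precisely where its internal structure is indispensable.
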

\begin{proof}
The proof is similar to the proof of Lemma~\ref{lemma_index_pairs_flow_map} as given in
\cite[Theorem 2]{MM1}: The proof of \cite[Chapter I, Theorem 5.1]{Ryba} shows the existence
of an open neighborhood $V\subset Y$ of $S$ such that $\cl V$ isolates $S$ and of continuous functions
$\kappa,\lambda \colon V \to [0,\infty)$ such that $S = \kappa^{-1}(0) \cap \lambda^{-1}(0),$ and
\begin{enumerate}[label=(\alph*)]
\item If $\kappa(x)>0, t>0$ and $\phi(t,x) \in V$, then $\kappa(x)<\kappa(\phi(t,x));$
\item if $\lambda(x)>0, t>0$ and $\phi(t,x)\in V$, then $\lambda(x)>\lambda(\phi(t,x)).$
\end{enumerate}
For an arbitrary $\zeta>0$, define subsets of $V$:
\begin{align*}
G(\zeta) := \{ x \in V \mid \kappa(x) < \zeta, \lambda(x) < \zeta \}, \\
H(\zeta) := \{ x \in V \mid \kappa(x) \leq \zeta, \lambda(x) \leq \zeta \}.
\end{align*}
Let $T := \max\setof{ \stepfunction(x) \mid x \in \cl V }$.
The local compactness of $Y$ can be used to observe that for any open neighborhood $U$ of $S$
there is a $\zeta>0$ such that $\phi([0,T],H(\zeta)) \subset U$. Applying this observation to $U=\Int V$,
we conclude the existence of an $\epsilon >0$ such that $\phi([0,T],H(\epsilon)) \subset V$.
Let $M := H(\epsilon)$. Applying the observation to $U=G(\epsilon)$ shows the existence of a
$\delta>0$ such that $\phi([0,T],H(\delta)) \subset G(\epsilon).$ Define
\begin{align*}
Q_1 &:= \{ x \in V \mid \kappa(x)\leq\epsilon, \lambda(x) \leq \delta \}, \\
Q_2 &:= \{ x \in V \mid \delta \leq \kappa(x) \leq \epsilon, \lambda(x) \leq \delta \} \subset Q_1.
\end{align*}
The pair $(Q_1,Q_2)$ is an index pair for $(S,\phi)$ and for $(S,\phi_\stepfunction)$
in $M$.
It is straightforward to check the properties in Definitions \ref{defIndexPairFlow} and
\ref{defIndexPairMap}, hence (i) is proven.

To see (ii), we note that
\[ (Q_1,Q_2) \times [0,1] \ni (x,s) \mapsto \phi(s\stepfunction(x),x) \in
(Q_1 \cup (Y \setminus \Int{M}), Q_2 \cup (Y \setminus \Int{M})) \]
is a homotopy from $i_Q$ to $f_Q$. Hence the index map $I_Q$ is the identity.
\end{proof}

\begin{theorem}
\label{thm:indexFlow}
Let $S$ be an isolated invariant set for $\phi$ and $\phi_\stepfunction$.
Let $(P_1,P_2)$ be an index pair
for $(S,\phi_\stepfunction)$ and $I_P$ its index map. Then there is an
isomorphism
\[ \Con{S,\phi} \cong \dom\Leray(I_P). \]
\end{theorem}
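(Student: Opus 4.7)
The plan is to build a bridge between an arbitrary index pair for $(S,\phi_\stepfunction)$ and a particularly well-behaved one that simultaneously serves as an index pair for $(S,\phi)$. The key tool is Lemma~\ref{lemma_index_pairs_flow_map_n}, which provides exactly such a common index pair $Q=(Q_1,Q_2)$ with the extra feature that its index map $I_Q$ is the identity on $\Ho{Q_1,Q_2}$.

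First, I would apply Lemma~\ref{lemma_index_pairs_flow_map_n} to $S$ and $\phi_\stepfunction$ (using that $S$ is isolated invariant for both $\phi$ and $\phi_\stepfunction$ by hypothesis), obtaining $Q=(Q_1,Q_2)$ and an isolating neighborhood $M$. Since $Q$ is an index pair for $(S,\phi)$ in the flow sense, the homological Conley index satisfies
\[
\Con{S,\phi} = \Ho{Q_1,Q_2}.
\]
Because $I_Q = \id$ on $\Ho{Q_1,Q_2}$ is already an automorphism, $\Leray(I_Q)=I_Q=\id$, so $\dom\Leray(I_Q)=\Ho{Q_1,Q_2}$. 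Combining these two identifications gives $\Con{S,\phi}\cong \dom\Leray(I_Q)$.

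Next, I would invoke the fact that the Leray-functor Conley index for a map is independent of the choice of index pair, up to conjugacy; this is \cite[Theorem 2.6]{MM3} as referenced in Section~\ref{ssec:leray}. Since both $(P_1,P_2)$ and $(Q_1,Q_2)$ are index pairs for the same isolated invariant set $S$ of $\phi_\stepfunction$, the automorphisms $\Leray(I_P)$ and $\Leray(I_Q)$ are conjugate. In particular their domains are isomorphic:
\[
\dom\Leray(I_P)\cong \dom\Leray(I_Q).
\]
Stringing the two isomorphisms together yields $\Con{S,\phi}\cong \dom\Leray(I_P)$, as required.

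The main obstacle I anticipate is purely a bookkeeping one: making sure that the common index pair $Q$ really is an index pair for both $(S,\phi)$ in the flow sense of Definition~\ref{defIndexPairFlow} and for $(S,\phi_\stepfunction)$ in the map sense of Definition~\ref{defIndexPairMap}, so that it is legitimate to use it on the flow side to read off $\Con{S,\phi}$ and on the map side to compute $\Leray(I_Q)$. This is precisely what Lemma~\ref{lemma_index_pairs_flow_map_n} is designed to give, so no further work beyond citing it is needed. The homotopy $(x,s)\mapsto\phi(s\stepfunction(x),x)$ used there also ensures $I_Q=\id$, which is what collapses the Leray functor to the identity and makes the final isomorphism transparent.
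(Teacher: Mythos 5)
Your proposal is correct and follows essentially the same route as the paper: both invoke Lemma~\ref{lemma_index_pairs_flow_map_n} to obtain a common index pair $Q$ with $I_Q=\id$, use the conjugacy of $\Leray(I_P)$ and $\Leray(I_Q)$ from the index-pair independence of the Leray Conley index, and conclude $\dom\Leray(I_P)\cong\dom\Leray(I_Q)=\Ho{Q_1,Q_2}=\Con{S,\phi}$. No gaps.
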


\begin{proof}
Consider a common index pair $(Q_1,Q_2)$ for $(S,\phi)$ and
$(S,\phi_\stepfunction)$ in $M$ as in Lemma~\ref{lemma_index_pairs_flow_map_n}.
Then $\Leray(I_P)$ and $\Leray(I_Q)$ are conjugate and therefore
\[\dom\Leray(I_P) \cong \dom\Leray(I_Q) = \dom I_Q = \Ho{Q_1,Q_2}.\]
\end{proof}

\subsubsection{Shift equivalences}
\label{ssec:shift-equi}
Another equivalence relation commonly used on the index map $I_P$ is the following one proposed in~\cite{RS}.

\begin{definition}
\label{def:shift-equi}
Let $\alpha\colon V \to V$ and $\beta\colon W \to W$ be endomorphisms of
graded modules. They are \emph{shift equivalent} if there are homomorphisms
$r \colon V \to W$ and $s\colon W \to V$ such that
$\beta\circ r = r\circ \alpha$, $s \circ \beta = \alpha\circ s$,
$s\circ r = \alpha^n$ and $r \circ s = \beta^n$ for some $n \in \NN$.
\end{definition}

\begin{theorem}{\rm(\cite[Theorem 3.29]{MiMo}, \cite[Lemma 4.3]{Sz})}
\label{thmShiftEqui}
If $P$ and $P'$ are two index pairs around the same isolated invariant set
for a discrete dynamical system, then $I_P$ and $I_{P'}$ are shift equivalent.
\end{theorem}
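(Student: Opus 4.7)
The plan is to construct homomorphisms $r$ and $s$ that implement the shift equivalence using a suitably large iterate $f^n$ of $f$, and then to verify the four axioms of Definition~\ref{def:shift-equi}. Write $P = (P_1, P_2)$, $P' = (P'_1, P'_2)$, $\alpha := I_P$ acting on $V := \Ho{P_1, P_2}$, and $\beta := I_{P'}$ acting on $W := \Ho{P'_1, P'_2}$.

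First I would produce a compact set $M$ with $S \subset \Int M$ that is an isolating neighborhood for $S$ simultaneously with respect to both index pairs, and if necessary replace $P$ and $P'$ by index pairs contained in $M$ (this is possible by the standard construction of index pairs arbitrarily close to $S$). Using the compactness of $S$ together with property (iii) of Definition~\ref{defIndexPairMap}, orbits that remain inside $M$ for many steps must accumulate near $S \subset \Int(P_1 \setminus P_2) \cap \Int(P'_1 \setminus P'_2)$. From this I would extract an integer $n \in \NN$ such that every $x \in P_1$ with $f^j(x) \in \Int M$ for $j=0,\dots,n$ satisfies $f^n(x) \in \Int(P'_1 \setminus P'_2)$, and symmetrically with the roles of $P$ and $P'$ exchanged. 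This $n$ will govern the exponent appearing in Definition~\ref{def:shift-equi}.

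Next, in analogy with the definition of the index map, I would consider the pair map $f^n_{P\to P'}\colon(P_1,P_2) \to (P'_1 \cup (Y\setminus\Int M),\, P'_2 \cup (Y\setminus\Int M))$ given by $x \mapsto f^n(x)$, which is well-defined by the choice of $n$. Set $r := \Ho{i_{P'}}^{-1} \circ \Ho{f^n_{P\to P'}}$, where $\Ho{i_{P'}}$ is the excision isomorphism identifying $\Ho{P'_1, P'_2}$ with the homology of the enlarged pair. Define $s\colon W \to V$ symmetrically. The commutation identities $\beta \circ r = r \circ \alpha$ and $s \circ \beta = \alpha \circ s$ then follow formally from $f \circ f^n = f^n \circ f$ together with the naturality of excision with respect to $f$.

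For the multiplicative identities I would show $s \circ r = \alpha^{2n}$ and $r \circ s = \beta^{2n}$, which is an admissible choice of exponent in Definition~\ref{def:shift-equi}. Tracing through the construction, $s \circ r$ is the endomorphism of $V$ induced by $f^{2n}$ on relative homology after the excision identifications, which by functoriality is precisely $\alpha^{2n}$; the identity $r \circ s = \beta^{2n}$ follows symmetrically. The main obstacle is the production of the integer $n$ and the verification that $f^n_{P\to P'}$ is genuinely well-defined as a map of pairs: both points require the interplay between the isolation of $S$ in $M$ and the open condition $S \subset \Int(P'_1\setminus P'_2)$ to guarantee that a sufficiently large iterate of $f$ carries any orbit that remains long in $M$ into $\Int(P'_1\setminus P'_2)$. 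Once $n$ is secured, shift equivalence reduces to routine manipulation of induced maps and excision isomorphisms.
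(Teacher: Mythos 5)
The paper gives no proof of Theorem~\ref{thmShiftEqui}; it is imported from \cite{MiMo} and \cite{Sz}, so your argument can only be judged on its own terms, and it has two genuine gaps. The central one is the claimed existence of an $n$ such that every $x\in P_1$ with $f^j(x)\in\Int M$ for $j=0,\dots,n$ satisfies $f^n(x)\in\Int(P_1'\setminus P_2')$. Forward confinement for $n$ steps only forces $f^n(x)$ to lie close to the set of points admitting a backward orbit segment of length $n$ in $M$, i.e.\ close to the ``unstable set'' $\Inv^-(M,f)$, not close to $S=\Inv(M,f)$; it is confinement both forward \emph{and} backward that localizes a point near $S$. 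Concretely, for the saddle $f(x,y)=(x/2,\,2y)$ with $S=\{(0,0)\}$, $M=P_1=P_1'=[-1,1]^2$, $P_2=[-1,1]\times([-1,-1/2]\cup[1/2,1])$ and $P_2'=[-1,1]\times([-1,-1/4]\cup[1/4,1])$, the point $x=(0,\,0.9\cdot 2^{-n})$ lies in $P_1\setminus P_2$ and satisfies $f^j(x)\in\Int M$ for all $j\le n$, yet $f^n(x)=(0,0.9)\in P_2'$. So no such $n$ exists. This also undermines the well-definedness of $f^n_{P\to P'}$ as a map of pairs: your argument says nothing about points whose orbits leave $\Int M$ and later re-enter (for which $f^n(x)$ may lie in $\Int M$ but outside $P_1'$), nor about where $P_2$ is sent (the map of pairs requires $f^n(P_2)\subset P_2'\cup(Y\setminus\Int M)$, whereas your criterion would push confined points of $P_2$ into $\Int(P_1'\setminus P_2')$). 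Controlling exactly these exiting-and-returning orbits is the technical heart of the cited proofs.

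The second gap is the opening reduction ``replace $P$ and $P'$ by index pairs contained in $M$.'' That such a replacement does not change the shift equivalence class of the index map is an instance of the very theorem being proved, so as written the reduction is circular; it could only be salvaged by first establishing the special case of suitably nested index pairs in a common isolating neighborhood and then invoking transitivity of shift equivalence, neither of which appears in your sketch. The overall shape of your construction, with connecting homomorphisms $r$ and $s$ induced by a large iterate of $f$ and exponent $2n$ in Definition~\ref{def:shift-equi}, is indeed the mechanism used in \cite{FR} and \cite{Sz}, but the two points above are where essentially all of the work lies, and both are missing or incorrect here.
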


The following theorem is similar to, but slightly stronger than Theorem~\ref{thm:indexFlow}.
\begin{theorem}
\label{thm:indexFlowShift}
Let $S$ be an isolated invariant set for $\phi$ and $\phi_\stepfunction$.
Let $(P_1,P_2)$ be an index pair
for $(S,\phi_\stepfunction)$ and $I_P$ its index map. Then there is an
isomorphism
\[ \Con{S,\phi} \cong \Ho{P_1,P_2}/\gker(I_P). \]
\end{theorem}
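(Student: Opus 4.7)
The plan is to imitate the proof of Theorem~\ref{thm:indexFlow} but replace the Leray-functor argument by a direct computation using shift equivalence. First I would invoke Lemma~\ref{lemma_index_pairs_flow_map_n} to obtain a common index pair $(Q_1,Q_2)$ for $(S,\phi)$ and $(S,\phi_\tau)$ in some isolating neighborhood $M$, with the crucial additional feature that the index map $I_Q$ equals the identity on $\Ho{Q_1,Q_2}$. Since both $(P_1,P_2)$ and $(Q_1,Q_2)$ are index pairs around $S$ for the discrete system $\phi_\tau$, Theorem~\ref{thmShiftEqui} yields that $I_P$ and $I_Q=\id$ are shift equivalent. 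Because $(Q_1,Q_2)$ is simultaneously an index pair for $\phi$, we have $\Con{S,\phi}=\Ho{Q_1,Q_2}$, so the theorem reduces to showing that
\[
\Ho{P_1,P_2}/\gker(I_P) \cong \Ho{Q_1,Q_2}.
\]

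The main computation is the following general fact: if $\alpha\colon V\to V$ is shift equivalent to the identity $\id_W\colon W\to W$, then $V/\gker(\alpha)\cong W$. Let $r\colon V\to W$ and $s\colon W\to V$ be the maps witnessing the shift equivalence, with $r\circ s=\id_W^n=\id_W$, $s\circ r=\alpha^n$, and the intertwining relations $\id_W\circ r=r\circ\alpha$ and $s\circ\id_W=\alpha\circ s$. The first intertwining relation simplifies to $r=r\circ\alpha$, which by iteration gives $r=r\circ\alpha^m$ for all $m\geq 0$. I would now identify $\ker r$ with $\gker(\alpha)$: if $v\in\gker(\alpha)$ then $\alpha^m(v)=0$ for some $m$, hence $r(v)=r(\alpha^m(v))=0$; conversely if $r(v)=0$, then $s(r(v))=\alpha^n(v)=0$, so $v\in\gker(\alpha)$. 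Combined with the surjectivity of $r$ coming from $r\circ s=\id_W$, the first isomorphism theorem gives the desired isomorphism $V/\gker(\alpha)\cong W$.

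The conclusion then follows by assembling the pieces: applying this fact to $\alpha=I_P$, $V=\Ho{P_1,P_2}$, $W=\Ho{Q_1,Q_2}$ gives
\[
\Ho{P_1,P_2}/\gker(I_P)\cong\Ho{Q_1,Q_2}=\Con{S,\phi}.
\]
The only nontrivial step is the little lemma about shift equivalence to the identity, and the point at which the argument could fail would be if Lemma~\ref{lemma_index_pairs_flow_map_n} did not also provide $I_Q=\id$; however, that part is exactly what was verified in its proof via the straight-line homotopy $(x,s)\mapsto\phi(s\tau(x),x)$ from $i_Q$ to $f_Q$. So the real work has already been done, and this theorem is essentially a corollary of Lemma~\ref{lemma_index_pairs_flow_map_n} together with the standard shift-equivalence invariance of index pairs.
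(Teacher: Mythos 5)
Your proposal is correct and follows essentially the same route as the paper: both invoke Lemma~\ref{lemma_index_pairs_flow_map_n} for a common index pair with $I_Q=\id$ and Theorem~\ref{thmShiftEqui} for the shift equivalence, and then finish with elementary algebra. The only (immaterial) difference is in how that algebra is organized --- you identify $\ker(r)=\gker(I_P)$ and apply the first isomorphism theorem to $r$, whereas the paper passes through $\im(s)=\im(I_P^n)$ and the observation that $\ker(I_P^n)=\gker(I_P)$.
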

\begin{proof}
There is a common index pair $(Q_1,Q_2)$ for $(S,\phi)$ and
$(S,\phi_\stepfunction)$ due to Lemma~\ref{lemma_index_pairs_flow_map_n}.
Thus, by Theorem~\ref{thmShiftEqui}, there are homomorphisms $r,s$ and an $n\in\NN$ such that the
following diagram commutes.
\[ \begin{tikzcd}
\Ho{P_1,P_2} \ar{d}[left]{r} \ar{r}{I_P^n} & \Ho{P_1,P_2} \ar{d}{r} \\
\Ho{Q_1,Q_2} \ar{r}[below]{I_Q^n=\id} \ar{ur}{s} & \Ho{Q_1,Q_2}
\end{tikzcd} \]
The lower right triangle shows that $s$ is injective and $r$ is surjective.
This yields
\[ \Con{S,\phi}\cong \Ho{Q_1,Q_2} \cong \im(s) = \im(I_P^n)
   \cong \Ho{P_1,P_2}/\ker(I_P^n).\]
Additionally, $\ker(I_P^n) = \gker(I_P)$ because $I_P^{kn}=I_P^n$ for any $k>0$.
\end{proof}

\subsection{Morse decompositions}
\label{sec:MD}

\begin{definition}
Let $y \in Y$.
\enumeration{
  \item The $\omega$-\emph{limit set} $\omega(y,\phi)$ is
    the set of accumulation points of $\phi([0,\infty),y)$.
  \item The $\alpha$-\emph{limit set} $\alpha(y,\phi)$ is
    the set of accumulation points of $\phi((-\infty,0],y)$.
}
\end{definition}

\begin{definition}
Let $f\colon Y \to Y$ be a discrete dynamical system.
Let $y \in Y$ be such that there is a solution $\ZZ \to Y$ through $y$.
\enumeration{
  \item The $\omega$-\emph{limit set} $\omega(y,f)$ is the set of accumulation points of $\{f^k(y) \mid k>0\}$.
  \item A point $x \in Y$ is in the $\alpha$-\emph{limit set} $\alpha(y,f)$ iff
  there exists a solution $\gamma \colon \ZZ \to Y$ with $\gamma(0)=y$ such that $x$
  is an accumulation point of $\gamma((-\infty,0])$.
}
\end{definition}

\begin{definition}
\label{def:MorseDecomp}
Given a dynamical system (i.e., a flow $\phi$ or a map $f$) and an isolating neighborhood
$X$ with $S=\Inv(X)$ (denoting $\Inv(X,\phi)$ or $\Inv(X,f)$, respectively),
we call a set of disjoint isolated invariant sets $\{ M_p \mid p \in \cP \}$
together with an acyclic directed graph $\mathrm{MG}$ with vertices $\cP$ a
\emph{Morse decomposition in $X$} if for every $y \in S$ one of the following holds:
\enumeration{
  \item $y \in M_p$ for a certain $p \in \cP$; or
  \item there are $p,q \in \cP$ and a path from $p$ to $q$ in $\mathrm{MG}$,
  such that $\alpha(y) \subset M_p$ and $\omega(y) \subset M_q$.
}
\end{definition}

In order to show that a certain Morse decomposition for $\phi_\stepfunction$ is
also a Morse decomposition for $\phi$, we can apply the following two lemmata.

For $A \subset Y$, let
\[ \phi_{[0,\stepfunction]}(A) := \bigcup_{x \in A} \phi([0,\stepfunction(x)],x) \subset Y. \]

\begin{lemma}
\label{criterionEqualInv}
Let $\{ M_p \mid p \in \cP \}$ be a Morse decomposition in $X$
for $\phi_\stepfunction$ with isolating neighborhoods
$N_p$, i.e., $N_p \cap N_q = \varnothing$ for $p \neq q$ and
$M_p = \Inv(N_p,\phi_\stepfunction) \subset \Int{N_p}$ for all $p\in\cP$. Let $p \in \cP$. If
\[ (*) \quad \quad \phi_{[0,\stepfunction]}(N_p) \subset X \text{ and }\phi_{[0,\stepfunction]}(N_p) \cap N_q = \varnothing \text{ for all } q \in \cP \setminus \{p\}, \]
then $\Inv(N_p,\phi) = \Inv(N_p,\phi_\stepfunction).$
\end{lemma}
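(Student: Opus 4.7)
The plan is to prove both inclusions separately. The inclusion $\Inv(N_p,\phi) \subset \Inv(N_p,\phi_\stepfunction)$ is immediate from Lemma~\ref{lemmaSubtrajectory}: if $x \in \Inv(N_p,\phi)$ then $\phi(\RR,x) \subset N_p$, the continuous positive function $\stepfunction$ is bounded on this compact set, and Lemma~\ref{lemmaSubtrajectory} supplies a $\phi_\stepfunction$-solution through $x$ lying inside $\phi(\RR,x) \subset N_p$, so $x \in \Inv(N_p,\phi_\stepfunction)$.

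For the reverse inclusion, fix $x \in \Inv(N_p,\phi_\stepfunction)$ together with a $\phi_\stepfunction$-solution $\gamma \colon \ZZ \to N_p$ through $x$. Define sample times by $t_0 = 0$ and $t_{n+1} - t_n = \stepfunction(\gamma(n))$, so that $\phi(t_n,x) = \gamma(n)$; continuity and positivity of $\stepfunction$ on the compact set $N_p$ give a positive lower bound for $\stepfunction$ there, hence $t_n \to \pm\infty$. For any $t \in \RR$, picking $n$ with $t \in [t_n,t_{n+1}]$ yields $\phi(t,x) = \phi(t-t_n,\gamma(n)) \in \phi([0,\stepfunction(\gamma(n))],\gamma(n)) \subset \phi_{[0,\stepfunction]}(N_p)$. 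Thus $\phi(\RR,x) \subset \phi_{[0,\stepfunction]}(N_p)$, which is compact (a continuous image of the compact set $\{(z,s) \mid z \in N_p,\ 0 \leq s \leq \stepfunction(z)\}$), is contained in $X$ by $(*)$, and is disjoint from every $N_q$ with $q \neq p$.

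The remaining step, upgrading this to $\phi(\RR,x) \subset N_p$, I do by contradiction. Suppose $y := \phi(t^*,x) \notin N_p$ for some $t^* \in \RR$; then $y$ lies in no $N_q$ at all, so $y \notin \bigcup_q M_q$. Because $\phi(\RR,y) = \phi(\RR,x)$ is bounded, Lemma~\ref{lemmaSubtrajectory} furnishes a $\phi_\stepfunction$-solution through $y$ inside $\phi(\RR,x) \subset X$, placing $y \in \Inv(X,\phi_\stepfunction) = S$. Definition~\ref{def:MorseDecomp} then forces case (ii): there exist $p',q' \in \cP$ with $\alpha(y,\phi_\stepfunction) \subset M_{p'}$, $\omega(y,\phi_\stepfunction) \subset M_{q'}$, and a path from $p'$ to $q'$ in $\mathrm{MG}$. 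All forward iterates $\phi_\stepfunction^k(y)$ lie in $\phi(\RR,x) \subset \phi_{[0,\stepfunction]}(N_p)$, so the nonempty limit set $\omega(y,\phi_\stepfunction)$ is contained in $M_{q'} \cap \phi_{[0,\stepfunction]}(N_p) \subset N_{q'} \cap \phi_{[0,\stepfunction]}(N_p)$, which by $(*)$ forces $q' = p$. Applying the same reasoning to the backward $\phi_\stepfunction$-solution supplied by Lemma~\ref{lemmaSubtrajectory} gives $p' = p$. Acyclicity of $\mathrm{MG}$ then rules out a path from $p$ to itself, yielding the contradiction.

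The main obstacle is this last step: combining the compactness of $\phi_{[0,\stepfunction]}(N_p)$ with $(*)$ to trap both the $\alpha$- and $\omega$-limits of $y$ inside $M_p$, and then invoking acyclicity of $\mathrm{MG}$. The argument implicitly reads ``path from $p'$ to $q'$'' in Definition~\ref{def:MorseDecomp} as a nontrivial directed path, so that acyclicity genuinely excludes the diagonal case $p' = q' = p$.
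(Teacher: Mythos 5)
Your proof is correct and follows essentially the same route as the paper's: you glue the $\phi_\tau$-solution into a flow orbit trapped in the compact set $\phi_{[0,\tau]}(N_p)$, use Lemma~\ref{lemmaSubtrajectory} for the direction $\Inv(N_p,\phi)\subset\Inv(N_p,\phi_\tau)$, and invoke the Morse-decomposition property together with acyclicity of $\mathrm{MG}$ to exclude orbit points outside $N_p$ --- the paper packages the identical ingredients via the auxiliary set $N_p'=\cl\bigl(X\setminus\bigcup_{q\neq p}N_q\bigr)$ and a cyclic chain of inclusions. The caveat you flag (reading ``path from $p'$ to $q'$'' as a nontrivial path so that acyclicity rules out $p'=q'=p$) is used implicitly in the paper's own proof as well (``they cannot lie within the same Morse set''), so it is not a gap relative to the paper.
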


\begin{proof}
Let $p \in \cP$ and
$ N_p' = \cl \left(X \setminus \bigcup_{q\in \cP\setminus\{p\}} N_q\right). $
First we show
\begin{equation}
\label{criterionEqualInvEq1}
  \Inv(N_p, \phi_\stepfunction) \subset \Inv(N_p',\phi).
\end{equation}
Let $\gamma\colon \ZZ \to N_p$ be a solution to $\phi_\stepfunction$ in $N_p$. Then for each $n\in\ZZ$:
$\phi([0,\stepfunction(\gamma(n))],\gamma(n)) \subset N_p'$ by assumption $(*)$ in the lemma. Gluing these pieces together yields a
trajectory for $\phi$ in $N_p'.$

We also show
\begin{equation}
\label{criterionEqualInvEq2}
   \Inv(N_p',\phi_\stepfunction) \subset \Inv(N_p,\phi_\stepfunction).
\end{equation}
Assume there is a point
$x \in \Inv(N_p',\phi_\stepfunction)\setminus \Inv(N_p,\phi_\stepfunction)$.
Hence, $x \notin \cup_{q\in P} M_q$. Now either $\alpha(x,\phi_\stepfunction)$ or $\omega(x,\phi_\stepfunction)$
must lie in some $M_q$ with $q \neq p$
because they cannot lie within the same Morse set.
But this means that any solution of $\phi_\stepfunction$ through $x$
has to contain points in $\Int N_q$, which is disjoint from $N_p'.$
We conclude $x \notin \Inv(N_p',\phi_\stepfunction)$, a contradiction.

Overall, we get the following inclusions, where the middle one is trivial.
\[ M_p \stackrel{(\ref{criterionEqualInvEq1})}{\subset} \Inv(N_p', \phi)
   \subset \Inv(N_p',\phi_\tau) \stackrel{(\ref{criterionEqualInvEq2})}{\subset} M_p. \]
Each set is the same subset of $N_p$. Therefore also
$\Inv(N_p',\phi) = \Inv(N_p,\phi).$
\end{proof}

\begin{lemma}
\label{lemmaMorseFlow}
Let $X\subset Y$ be an isolating neighborhood for $\phi_\tau$ (and
hence for $\phi$).
Let $\{ M_p \mid p \in \cP \}$ be a Morse decomposition for $\phi_\stepfunction$
in $X$ with
Morse graph $\mathrm{MG}$ and assume that each $M_p$ is invariant also for $\phi$.
Then $\{ M_p \mid p \in \cP \}$ is also a Morse decomposition
for $\phi$ in $X$ with Morse graph $\mathrm{MG}$.
\end{lemma}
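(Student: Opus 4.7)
The plan is to transfer the given Morse decomposition of $\phi_\tau$ across to $\phi$ by using Lemma~\ref{lemmaSubtrajectory} as a two-way bridge between the two systems and then exploiting the hypothesis that each $M_p$ is $\phi$-invariant. Continuity of $\tau$ on the compact set $X$ guarantees $\tau_0 := \inf_X \tau > 0$ and $T := \sup_X \tau < \infty$, so every invocation of Lemma~\ref{lemmaSubtrajectory} on trajectories inside $X$ (or inside a compact isolating neighborhood $N_p\subset X$) is legitimate.

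First I would check that each $M_p$ is an isolated invariant set for $\phi$. Let $N_p$ be the compact isolating neighborhood for $M_p$ with respect to $\phi_\tau$. The inclusion $M_p \subset \Inv(N_p,\phi)$ is immediate from the $\phi$-invariance of $M_p$. The converse uses Lemma~\ref{lemmaSubtrajectory}: any $x \in \Inv(N_p,\phi)$ has $\phi(\RR,x) \subset N_p$, so the lemma yields a $\phi_\tau$-solution through $x$ contained in $N_p$, whence $x \in \Inv(N_p,\phi_\tau) = M_p$. The same argument with $X$ in place of $N_p$ shows that every $y \in \Inv(X,\phi)$ also lies in $\Inv(X,\phi_\tau)$, so the Morse-decomposition hypothesis for $\phi_\tau$ applies to $y$: either $y \in M_p$ for some $p$ (and case (i) of Definition~\ref{def:MorseDecomp} is immediate), or there exist $p,q \in \cP$ and a path $p\to q$ in $\mathrm{MG}$ with $\alpha(y,\phi_\tau) \subset M_p$ and $\omega(y,\phi_\tau) \subset M_q$.

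The main obstacle is then promoting these discrete limit-set inclusions to the flow. For the $\omega$-side, take $z \in \omega(y,\phi)$ with $\phi(t_n,y) \to z$ and $t_n \to \infty$, and write the forward $\phi_\tau$-iterates as $\phi_\tau^k(y) = \phi(s_k,y)$ with $s_{k+1}-s_k = \tau(\phi_\tau^k(y)) \in [\tau_0,T]$. Choosing $k_n$ so that $t_n - s_{k_n} \in [0,T]$ and passing to a subsequence, $t_n - s_{k_n} \to s$ and $\phi_\tau^{k_n}(y) \to z' \in \omega(y,\phi_\tau) \subset M_q$, whence $z = \phi(s,z') \in M_q$ by the assumed $\phi$-invariance of $M_q$. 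For the $\alpha$-side I would fix a full backward $\phi_\tau$-solution $\gamma$ through $y$ provided by Lemma~\ref{lemmaSubtrajectory}; its $\phi$-parameters decrease by at least $\tau_0$ per step, so they tend to $-\infty$. The same interpolation argument in backward time writes any $z \in \alpha(y,\phi)$ as $z = \phi(s',z')$ with $z' \in \alpha(y,\phi_\tau) \subset M_p$, giving $z \in M_p$. Since the path $p\to q$ in $\mathrm{MG}$ is inherited unchanged, $\{M_p\}$ together with $\mathrm{MG}$ is a Morse decomposition for $\phi$ on $X$.
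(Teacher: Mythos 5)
Your proof is correct, but it reaches the key conclusion by a genuinely different route than the paper. The paper argues by contradiction: it introduces the exit-time function $\sigma(x)=\sup\{t\ge 0\mid \phi([0,t],x)\subset N_q\}$, picks a compact sub-neighborhood $\widetilde{N_q}$ of $M_q$ on which $\sigma\ge T=\max\tau(N_q)$, and shows that if $\omega(y,\phi)$ left $N_q$ then infinitely many discrete iterates $\phi_\tau^n(y)$ would accumulate in $\cl(N_q\setminus\widetilde{N_q})$, contradicting $\omega(y,\phi_\tau)\subset M_q$; it then concludes $\omega(y,\phi)\subset N_q$ and invokes flow-invariance of $\omega$-limit sets together with $\Inv(N_q,\phi)=M_q$. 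You instead interpolate directly: writing each flow time $t_n$ as $s_{k_n}+r_n$ with $r_n\in[0,T]$ and extracting convergent subsequences, you exhibit every $z\in\omega(y,\phi)$ as $\phi(s,z')$ with $z'\in\omega(y,\phi_\tau)\subset M_q$, so the $\phi$-invariance of $M_q$ finishes the job without any auxiliary neighborhood or exit-time function (this reuse of machinery from Theorem~\ref{theorem1n} is what the paper's route relies on). Your argument is more elementary and in fact yields the sharper inclusion $\omega(y,\phi)\subset\phi([0,T],\omega(y,\phi_\tau))$; it also makes explicit two points the paper glosses over, namely that each $M_p$ is isolated invariant for $\phi$ (via Lemma~\ref{lemmaSubtrajectory} applied to $N_p$) and how the $\alpha$-side works, where your choice of the backward $\phi_\tau$-solution lying on the flow line through $y$ (again from Lemma~\ref{lemmaSubtrajectory}) is exactly what is needed to run the interpolation in backward time. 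The only points worth double-checking in a final write-up are that $k_n\to\infty$ (so that the subsequential limits really lie in the discrete limit sets), which follows from $t_n<s_{k_n+1}\le (k_n+1)T$, and that the forward orbit and the chosen backward solution stay in the compact set $X$ so that $\tau_0\le\tau\le T$ applies along them; both are easy and you have implicitly used them correctly.
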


\begin{proof}
We only need to show that the Morse graph $\mathrm{MG}$ is preserved.
From here, consider a point $y \in \Inv(X,\phi) \subset \Inv(X,\phi_\tau)$.
There are $p,q \in \cP$
and a directed path from $p$ to $q$ in $\mathrm{MG}$ such that
$\alpha(y,\phi_\stepfunction) \subset M_p$
and $\omega(y,\phi_\stepfunction) \subset M_q.$
We show that $\omega(y,\phi) \subset M_q$ by contradiction.
The analogous statement for the $\alpha$-limit is proven similarly.

Let $N_q$ be an isolating neighborhood for $\phi_\stepfunction$ and therefore
for $\phi$ around $M_q$,
hence $\Inv(N_q,\phi)=\Inv(N_q,\phi_\stepfunction)=M_q \subset \Int N_q.$
We continue analogously to the proof of Theorem~\ref{theorem1n}. We define a function
 \[
 \sigma\colon N_q\ni x\mapsto \sup\setof{t\in \RR_{\geq 0}\mid \phi([0,t],x)\subset N_q}\in [0,\infty].
 \]
Let $T = \max \stepfunction(N_q)$.
Now there is a compact neighborhood $\widetilde{N_q}$ of
$M_q$ such that $\sigma(x) \geq T$
for all $x \in \widetilde{N_q}$. For all $n\in \NN$,
let $\gamma(n) := \phi_\stepfunction^n(y).$

Assume that $\omega(y,\phi)$ contains a point $y'$ outside of $N_q$.
We construct a subsequence of $\gamma$ as follows:
Since $\widetilde{N_q}$ is a neighborhood of $\omega(y,\phi_\stepfunction)$
and $\gamma(\NN)\cap M_q = \varnothing$, there is an
$\tilde{n}\geq 0$ such that
$\gamma(\tilde{n}) \in \widetilde{N_q} \setminus M_q$.
Let $s>0$ be such that $\gamma(\tilde{n}) = \phi(s,y)$.
There is an $s'\geq s$ such that $\sigma(\phi(s',y))=T$, and then
$\phi([s',s'+T],x) \subset \cl(N_q \setminus \widetilde{N_q})$.
Hence, there is an $n_0 \geq \tilde{n}$ such that
$\gamma(n_0) \in \cl(N_q \setminus \widetilde{N_q})$.

Going on like this, we construct a subsequence $\gamma(n_0), \gamma(n_1), \ldots$ of
points in $\cl(N_q \setminus \widetilde{N_q})$.
It has a converging subsequence whose limit
is also in $\omega(y,\phi_\stepfunction)\subset M_q$. A contradiction.

Overall, $\omega(y,\phi) \subset N_q$ and therefore
$\omega(y,\phi) \subset \Inv(N_q,\phi) = M_q$ because $\omega$-limits are invariant.
\end{proof}

We are ready to show
\begin{theorem}
\label{theoremMorseDecompCriteria}
Let $X$ be an isolating neighborhood for $\phi_\stepfunction$ for an arbitrary
continuous function $\stepfunction \colon Y \to \RR^+$ and
$\{ M_p \mid p \in \cP \}$ a Morse decomposition for $\phi_\stepfunction$ in $X$
with isolating neighborhoods $\{N_p\}$.

Suppose that either
\begin{enumerate}[label=(\Alph*)]
  \item \label{critFixedStep} the function $\stepfunction$ is constant, i.e., $\stepfunction(x)=h$ for all $x\in Y$; or
  \item \label{critVaryingStep} for all $p \in \cP: \phi_{[0,\stepfunction]}(N_p) \subset X$
  and for all $q \neq p$ holds $\phi_{[0,\stepfunction]}(N_p) \cap N_q = \varnothing$.
\end{enumerate}
Then $\{M_p\}$ is also a Morse decomposition for $\phi$ in $X$ with the same Morse graph.
\end{theorem}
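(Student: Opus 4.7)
The plan is to reduce the theorem to Lemma~\ref{lemmaMorseFlow}, which already delivers the conclusion (the Morse decomposition and its Morse graph are preserved) once two hypotheses are verified: (a) $X$ is an isolating neighborhood for $\phi$, and (b) each Morse set $M_p$ is invariant under $\phi$. Thus the whole proof amounts to checking (a) and (b) in each of the two cases.

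For (a), the argument is independent of the case. Given $x \in \Inv(X,\phi)$, the full $\phi$-orbit $\phi(\RR,x)$ lies in the compact set $X$, so $\stepfunction$ is bounded along it. Lemma~\ref{lemmaSubtrajectory} then produces a full $\phi_\stepfunction$-solution through $x$ whose image is contained in $\phi(\RR,x) \subset X$, so $x \in \Inv(X,\phi_\stepfunction)$. This yields the inclusion $\Inv(X,\phi) \subset \Inv(X,\phi_\stepfunction) \subset \Int X$, meaning $X$ isolates under $\phi$ as well.

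For (b), the two hypotheses are handled by two different previous results. In case~\ref{critFixedStep}, $\stepfunction \equiv h$ so $\phi_\stepfunction = \phi_h$, and each $M_p$ is an isolated invariant set of $\phi_h$; applying the implication $(iii)\Rightarrow(i)$ of Theorem~\ref{theorem1n} with $S=M_p$ gives that $M_p$ is isolated invariant (in particular, invariant) for $\phi$. In case~\ref{critVaryingStep}, the assumed conditions $\phi_{[0,\stepfunction]}(N_p) \subset X$ and $\phi_{[0,\stepfunction]}(N_p) \cap N_q = \varnothing$ for $q \neq p$ coincide with hypothesis $(*)$ of Lemma~\ref{criterionEqualInv}. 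That lemma then provides $M_p = \Inv(N_p,\phi_\stepfunction) = \Inv(N_p,\phi)$, so $M_p$ is invariant under $\phi$.

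With (a) and (b) established, Lemma~\ref{lemmaMorseFlow} applies verbatim and yields that $\{M_p \mid p \in \cP\}$ is a Morse decomposition for $\phi$ in $X$ with the same Morse graph $\mathrm{MG}$. There is no real obstacle in this final assembly, since the substantive work has been done in Lemmas~\ref{criterionEqualInv} and \ref{lemmaMorseFlow} and in Theorem~\ref{theorem1n}; the present theorem is essentially the packaging of these two sufficient criteria for concluding that a Morse decomposition computed for $\phi_\stepfunction$ descends to one for $\phi$.
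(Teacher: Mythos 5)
Your proposal is correct and follows essentially the same route as the paper: case~\ref{critFixedStep} via Corollary~\ref{theorem_isolation} (equivalently, the implication $(iii)\Rightarrow(i)$ of Theorem~\ref{theorem1n} that you invoke) and case~\ref{critVaryingStep} via Lemma~\ref{criterionEqualInv}, both feeding into Lemma~\ref{lemmaMorseFlow}. The only difference is that you spell out the verification that $X$ isolates for $\phi$ (via Lemma~\ref{lemmaSubtrajectory}), which the paper absorbs into the parenthetical in the statement of Lemma~\ref{lemmaMorseFlow}.
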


\begin{proof}
The case~\ref{critFixedStep} of a constant time step follows from
Corollary~\ref{theorem_isolation} and
Lemma~\ref{lemmaMorseFlow}. The case~\ref{critVaryingStep} follows from
Lemmata~\ref{criterionEqualInv} and~\ref{lemmaMorseFlow}.
\end{proof}

\begin{remark}
\label{numericsCriterionB}
Criterion~\ref{critVaryingStep} enables us to verify numerically
that the output from the algorithm in Section~\ref{ssec:variableStep}
does indeed describe a Morse decomposition:
For each $p \in \cP$, we construct an enclosure
$\cZ(p)$ of all the trajectories $\phi([0,\tau(x)],x),\ x \in |\cN(p)|,$
such that $\cZ(p) \subset \cX \setminus \cup_{q \neq p} \cN(q)$.
Our algorithm uses CAPD routines to construct a good enclosure
$\cZ(p)$ with
\[ \bigcup_{\xi\in\cN(p)} \phi([0,\max \stepfunction(\xi)],\xi) \subset |\cZ(p)|, \]
In Figure~\ref{SubfigCriterionInv}, each set $\cN(p)$
is shown with a darker collar around it such that
together they form $\cZ(p)$.
In our example, the algorithm successfully checked
$\cZ(p) \subset \cX$ and
$\cZ(p) \cap \cN(q) = \varnothing$ for any $p \neq q$.
The additional checks for criterion~\ref{critVaryingStep} took only about a second.
\end{remark}

The following example shows that when~\ref{critFixedStep} is not fulfilled,
we need some kind of check that a Morse set $M_p$ for
$\phi_\stepfunction$ is also invariant under $\phi$.

\begin{remark}
\label{remark_counterexample}

Let $S^1 = \{ (\cos \theta, \sin \theta) \mid \theta \in [0,2\pi) \} \subset \RR^2$
be the phase space $X=Y$ and let the flow $\phi$ be induced by $\dot{\theta}=1$.

For every point $\theta \in S^1$, its limit sets are
$\omega(\theta,\phi) = S^1$ and $\alpha(\theta,\phi) = S^1$.
The sets $\varnothing$ and $S^1$
are the only subsets invariant for $\phi$. Define the time step function
\[ \stepfunction \colon S^1 \ni \theta \mapsto \sin\theta + 2\pi \in \RR^+, \]
which is well-defined and continuous since it has period $2\pi$.
Figure~\ref{fig:counterexample} shows a plot of $\stepfunction$
and a trajectory of $\phi_\stepfunction$.

By definition, $\stepfunction(0) = \stepfunction(\pi) = 2 \pi$,
$2\pi < \stepfunction((0,\pi))< 3\pi$
and $\pi < \stepfunction((\pi,2\pi))<2\pi$.
Additionally $|\stepfunction'(\theta)| < 1$ for $\theta\neq 0,\pi$.
These properties suffice to see that for any $0<\epsilon<\pi$ the subsets
$N_1 = [-\epsilon,+\epsilon]$ and $N_2 = [\pi-\epsilon,\pi+\epsilon]$
are isolating neighborhoods for $\phi_\stepfunction$ with isolated invariant sets $M_1 = \{0\}$
and $M_2=\{\pi\}$.
Whenever $\theta\notin\{0,\pi\}$, then $\alpha(\theta,\phi_\stepfunction)=M_1$
and $\omega(\theta,\phi_\stepfunction)=M_2$.
We do therefore get a Morse graph $1 \to 2$ for $\phi_\stepfunction$, an attractor-repeller pair.
But there is no
attractor-repeller pair for $\phi$ in which both invariant sets are non-empty.\footnote{R. Vandervorst provided a similar example, independently, to the first author.}

\begin{figure}
  \begin{subfigure}[b]{0.4\textwidth}
  \centering
  \resizebox{\textwidth}{!}{
  \begin{tikzpicture}[scale=1]
    \begin{axis}
      [
      xlabel={\LARGE $\theta$},
      xmin=-0.2,
      xmax=6.48318,
      xtick={0, 1.5708, 3.14159, 4.7123889, 6.28318 },
      xticklabels={ \LARGE 0, \LARGE $\frac{\pi}{2}$, \LARGE $\pi$, \LARGE $\frac{3\pi}{2}$, \LARGE$2\pi$ },
      ytick={0, 3.14159, 5.28318, 6.28318, 7.28318 },
      yticklabels={ \LARGE $0$, \LARGE $\pi$, \LARGE $2\pi-1$, \LARGE $2\pi$, \LARGE $2\pi+1$ },
      ymin=-0.5,
      extra y ticks = {5.28318, 6.28318, 7.28318 },
      extra y tick labels = ,
      extra y tick style  = { grid = major },
    ]
    \addplot [
      domain=-0.5:2*pi+0.5,
      samples=100,
      very thick
    ]
    {sin(deg(x))+2*pi)};
    \node at (360,730) {\LARGE$\stepfunction$};
    \end{axis}
  \end{tikzpicture}
  }
  \caption{Time step function $\stepfunction$ on $S^1$}
  \end{subfigure}%
  \quad
  \begin{subfigure}[b]{0.45\textwidth}
  \centering
  \resizebox{\textwidth}{!}{
  \begin{tikzpicture}[scale=1.8]
    \draw[very thick,|-|] (0.8660254,-0.5) arc (-30:30:1.0);
    \draw (1.15,0.4) node{\large$N_1$};
    \draw[very thick,|-|] (-0.8660254,-0.5) arc (210:150:1.0);
    \draw (-1.15,0.4) node{\large$N_2$};
    \draw[fill=black] (-1,0) circle [radius=0.05];
    \draw (1.13,0) node{\large$0$};
    \draw[fill=black] (1,0) circle [radius=0.05];
    \draw (-1.13,0) node{\large$\pi$};
    \draw[
      decoration={
      markings,
      mark=at position 0.2 with {\arrow{triangle 45}},
      mark=at position 0.53 with {
        \draw[fill=white] (0,0) circle [radius=0.05];
        \draw node[left]{\footnotesize$\gamma(3)$};},
      mark=at position 0.66 with {
        \draw[fill=white] (0,0) circle [radius=0.05];
        \draw node[below left]{\footnotesize$\gamma(2)$};},
      mark=at position 0.74 with {\arrow{triangle 45}},
      mark=at position 0.81 with {
        \draw[fill=white] (0,0) circle [radius=0.05];
        \draw node[below]{\footnotesize$\gamma(1)$};},
      mark=at position 0.90 with {
        \draw[fill=white] (0,0) circle [radius=0.05];
        \draw node[below right]{\footnotesize$\gamma(0)$};},
      mark=at position 0.95 with {
        \draw[fill=white] (0,0) circle [radius=0.05];
        \draw node[right]{\footnotesize$\gamma(-1)$};},
      },
      postaction={decorate}
    ]
    (0,0) circle [radius=1];  
  \end{tikzpicture}
  }
  \caption{Attractor-repeller pair for $\phi_\stepfunction$}
  \end{subfigure}%
\caption{The example from Remark~\ref{remark_counterexample}. The time step function on the left
admits the attractor-repeller pair for $\phi_\stepfunction$ in the right figure
with $N_1$ isolating the
repelling fixed point $0$ and $N_2$ isolating the attracting fixed point $\pi$.
The arrows represent the direction
of the flow $\phi$. A part of a typical trajectory $\gamma \colon \ZZ \to S^1$
of $\phi_\stepfunction$
in the lower half of the circle is drawn.}
\label{fig:counterexample}
\end{figure}
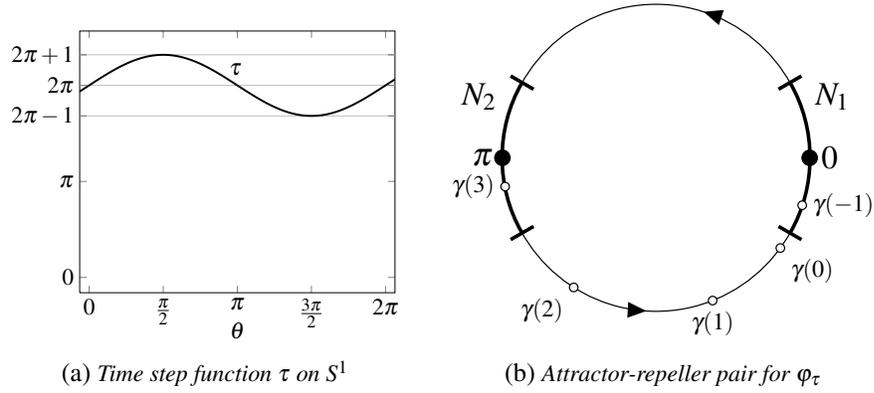
\end{remark}

The example shows that the assumption that $M_p$ be invariant also for the flow is necessary
in Lemma~\ref{lemmaMorseFlow}. Such a function $\stepfunction$ can always be constructed when
the flow has a limit cycle $L$. One just has to replace $2\pi$ by the period
of the limit cycle and extend $\stepfunction$ from $L$ to the whole phase space $Y$
using Tietze's extension theorem.

\section{Final remarks}

Using a time step function varying in space gives a lot of flexibility when applying
rigorous algorithms for finding Morse decompositions.
Sometimes, the step function $\stepfunction$ is the only way to
obtain a meaningful Morse decomposition numerically. Additionally, the heuristic
presented can be justified more naturally than a specific choice of fixed time step. Up to now,
we cannot see from a given equation which strategy is more promising. It would be interesting
to find a more systematic approach to find out when to use which strategy.

\end{document}